\pgfplotsset{compat=newest}
\newtheorem{theorem}{Theorem}[section]
\newtheorem{lemma}{Lemma}[section]
\newcommand{\N}{\mathbb{N}}
\newcommand{\Z}{\mathbb{Z}}
\newcommand{\R}{\mathbb{R}}
\newcommand{\dnu}{\partial_\nu}
\newcommand{\pdr}{\partial_r}
\newcommand{\dd}{\mathrm{d}}
\newcommand{\grad}{\nabla}
\newcommand{\ov}{\overline}
\newcommand{\paren}[1]{\left( #1 \right) }
\newcommand{\exD}{\mathbb{R}^2 \setminus \overline{D}}
\newcommand{\trB}{B_R \setminus \overline{D}}
\begin{document}
%\lhead{}
%\rhead{}

\begin{flushleft}
\Large 
\noindent{\bf \Large Existence of transmission eigenvalues for biharmonic scattering by a clamped planar region}

\end{flushleft}

\vspace{0.2in}

{\bf  \large Isaac Harris\footnote{corresponding author \texttt{harri814@purdue.edu}} }\\
\indent {\small Department of Mathematics, Purdue University, West Lafayette, IN 47907 }\\
\indent {\small Email: \texttt{harri814@purdue.edu} }\\

{\bf  \large Andreas Kleefeld}\\
\indent {\small Forschungszentrum J\"{u}lich GmbH, J\"{u}lich Supercomputing Centre, } \\
\indent {\small Wilhelm-Johnen-Stra{\ss}e, 52425 J\"{u}lich, Germany}\\
\indent {\small University of Applied Sciences Aachen, Faculty of Medical Engineering and } \\
\indent {\small Technomathematics, Heinrich-Mu\ss{}mann-Str. 1, 52428 J\"{u}lich, Germany}\\
\indent {\small Email: \texttt{a.kleefeld@fz-juelich.de}}\\

{\bf  \large Heejin Lee}\\
\indent {\small Zu Chongzhi Center for Mathematics and Computational Sciences, } \\
\indent {\small Duke Kunshan University, Kunshan, Jiangsu Province, China 215316}\\
\indent {\small Email: \texttt{heejin.lee@dukekunshan.edu.cn}}\\

\vspace{0.2in}

%%%%%%%%%%%%%%%%%%%%%%%%%%%%%%%%%%%%%%%%%%%%%%%%%%%%%%%%
\begin{abstract}
\noindent In this paper, we study the so-called clamped transmission eigenvalue problem. This is a new transmission eigenvalue problem that is derived from the scattering of an impenetrable clamped obstacle in a thin elastic plate. The scattering problem is modeled by a biharmonic wave operator given by the Kirchhoff--Love infinite plate problem in the frequency domain. These scattering problems have not been studied to the extent of other models. Unlike other transmission eigenvalue problems, the problem studied here is a system of homogeneous PDEs defined in all of $\mathbb{R}^2$. This provides unique analytical and computational difficulties when studying the clamped transmission eigenvalue problem. We are able to prove that there exist infinitely many real clamped transmission eigenvalues. This is done by studying the equivalent variational formulation. We also investigate the relationship of the clamped transmission eigenvalues to the Dirichlet and Neumann eigenvalues of the negative Laplacian for the bounded scattering obstacle. \\
\end{abstract}

\noindent{\bf Keywords:} Transmission Eigenvalues; Biharmonic Scattering; Clamped Boundary Conditions\\

\noindent{\bf MSC:} 35P25, 35J30

%%%%%%%%%%%%%%%%%%%%%%%%%%%%%%%%%%%%%%%%%%%%%%%%%%%%%%
\section{Introduction}
In this paper, we provide an analytical and numerical study of the transmission eigenvalues associated with the scattering of an impenetrable clamped obstacle in a thin elastic plate. The scattering problem is modeled by the Kirchhoff--Love infinite plate equation, which leads to a biharmonic scattering problem. Recently, in \cite{biwellposed,DongLi24} the authors studied the well--posedness for the direct scattering in a thin elastic plate for a clamped obstacle. {\color{black}Here, we study a new transmission eigenvalue problem associated with the aforementioned scattering problem that is different than previously studied problems \cite{interiorTE,ext-ScatTE,TEwObsticale,TE-AnisoCBC}}. This is due to the fact that, this eigenvalue problem is posed on all of $\R^2$ despite the scatterer being a bounded domain. This makes the study of these eigenvalues challenging. For instance, since the problem is posed in an unbounded domain we no longer have the compact embedding of the standard Sobolev spaces. Transmission eigenvalue problems are often non--self--adjoint and nonlinear, which adds to the difficulty in studying them analytically and numerically. The main contribution of this work is proving the existence of the so--called clamped transmission eigenvalues as well as developing new theoretical techniques that can be used for exterior transmission eigenvalue problems \cite{ext-ScatTE,exteriorTE,ext-SteklovEig}. 

As in previous works {\color{black}\cite{interiorTE,ext-ScatTE,TEwObsticale,TE-AnisoCBC}}, we see that this eigenvalue problem is derived from studying the associated inverse scattering problem. While inverse scattering problems have been extensively studied in acoustic and electromagnetic scattering theory, the corresponding problems for biharmonic scattering has only recently begun to attract attention \cite{near-lsmBH,iterative-BH23,DongLi-Unique,LSM-BHclamped,DSM-BH24,teemu-bihar,biharm-stab4potentials}. The aforementioned manuscripts have studied the inverse shape problem given either near or far field data for a few kinds of scatterers. In the case of using far field data, it has been shown that the far field operator associated with a clamped obstacle fails to be injective with a dense range at a transmission eigenvalues; see, e.g., \cite{te-discret}.  Although these inverse problems are relevant in various applications, including non--destructive testing and medical imaging \cite{app1,app2}, both theoretical and numerical investigations remain relatively limited.

Transmission eigenvalue problems have played an important role in inverse scattering theory. The existence of interior transmission eigenvalues for an isotropic acoustic scatterer has been studied in \cite{interiorTE,TEwObsticale,Cakoni-TE}, and the exterior transmission eigenvalues for an isotropic acoustic scatterer has been investigated in \cite{exteriorTE} for a spherically stratified media in three dimensions. Throughout the years, the associated inverse spectral problem has been of interest. This is due to the fact that these eigenvalues can be shown to depend (often monotonically) on the material properties of the scatterer. This has given rise to studying the inverse spectral problem to recover information about the scatterer from the transmission eigenvalues. In general, it has been shown that for acoustic and electromagnetic scatterers that these eigenvalues depend monotonically on the material parameters and geometric properties of the scatterer; see, e.g., \cite{interiorTE,TE-2CBCRafa,TE-AnisoCBC}.

Here, we consider a transmission eigenvalue problem defined through an interior--exterior formulation, where different governing equations hold inside the scatterer and in the unbounded exterior domain, respectively. These two regions are coupled through transmission conditions on the boundary of the scatterer. We study the existence of the real clamped transmission eigenvalues and investigate the relationship between the first clamped transmission eigenvalue and the first Dirichlet eigenvalue of the negative Laplacian for the scattering obstacle. The same transmission eigenvalue problem was also considered in \cite{te-discret}, where it was shown that the transmission eigenvalues can be recovered from biharmonic far field data and that the set of transmission eigenvalues is discrete with some assumption on the wave number.

This paper is organized as follows. In Section \ref{formulation}, we formulate the direct scattering problem and  associated clamped transmission eigenvalue problem. These eigenvalues can be seen as the frequencies where there is an incident wave that produces trivial far field data. In Section \ref{existence}, we prove the existence of real transmission eigenvalues using the techniques introduced in \cite{Cakoni-TE}. Additionally, we study the relationship between the first transmission eigenvalue and the first Dirichlet eigenvalue of the negative Laplacian. We then numerically investigate the interlacing between the clamped transmission eigenvalues along with the Dirichlet and Neumann eigenvalues for the obstacle in Section \ref{numerics}. Lastly, Section \ref{conclusion} provides a summary of our work along with outlooks to other interesting problems in this direction.

%%%%%%%%%%%%%%%%%%%%%%%%%%%%%%%%%%%%%%%%%%%%%%%%%%%%%%%%%
\section{Formulation of Transmission Eigenvalue Problem}\label{formulation}
In this section, we introduce the biharmonic scattering problem associated with the clamped transmission eigenvalues. Here, we assume that the clamped obstacle is modeled by a bounded region $D \subset \R^2$ with a {\color{black}Lipschitz} boundary $\partial D$. To illuminate the obstacle, one uses a time--harmonic incident plane wave, which we denote as $u^{\text{inc}}(x) = \text{e}^{\text{i}kx\cdot d}$, where $d$ is the corresponding incident direction such that  $d \in \mathbb{S}^1 = \{ x \in \R^2: |x|=1\}$. For our model, the corresponding scattered field $u^{\text{scat}}$ satisfies the biharmonic scattering problem for a fixed wave number $k>0$,  given by 
\begin{align}
\Delta^2 u^{\text{scat}} - k^4 u^{\text{scat}} = 0 \quad & \text{in } \mathbb{R}^2\setminus \overline{D}, \label{biharmonic} \\
u^{\text{scat}} \big|_{\partial D}=-u^{\text{inc}}  \quad \text{ and } \quad \dnu{u^{\text{scat}}} \big|_{\partial D} &=- \dnu{u^{\text{inc}}}. \label{cbc}
\end{align}
In equation \eqref{cbc}, we let $\nu$ denote the outward unit normal vector on the boundary $\partial D$. It is known that the scattered field $u^{\text{scat}}$ satisfies the radiation conditions \cite{poh-invsource} 
\begin{align}\label{SRC}
\lim_{r \to \infty} \sqrt{r}(\pdr{u^{\text{scat}}} - \text{i} k u^{\text{scat}}) = 0 \quad \text{ and } \quad \lim_{r \to \infty} \sqrt{r}(\pdr{ \Delta u^{\text{scat}}} - \text{i} k \Delta u^{\text{scat}}) = 0  \quad  \quad r=|x|,
\end{align}
which holds uniformly in $\hat{x}=x/|x|$.

As in \cite{DongLi24}, we consider two auxiliary functions $u_H$ and $u_M$ such that
\begin{align*} %\label{vhvm}
u_H = -\frac{1}{2k^2}\left(\Delta u^{\text{scat}} - k^2 u^{\text{scat}} \right) \quad  \text{ and } \quad u_M = \frac{1}{2k^2}\left(\Delta u^{\text{scat}} + k^2 u^{\text{scat}}\right),
\end{align*}
which implies that $u^{\text{scat}} = u_H + u_M$. Then, we see that $u_H$ satisfies the Helmholtz equation and $u_M$ satisfies the modified Helmholtz equation (i.e. with wave number=$\text{i}k$). This is a direct consequence of the fact that the differential operator $\Delta^2  - k^4 = (\Delta +k^2)(\Delta - k^2)$. 
Therefore, the biharmonic wave scattering problem \eqref{biharmonic}--\eqref{SRC} is equivalent to the following problem:
\begin{align}
\Delta u_H + k^2 u_H = 0 \quad \text{in } \exD  \quad &\text{ and } \quad  \Delta u_M - k^2 u_M = 0 \quad \text{in } \exD \label{vhvmeq1} \\
u_H+u_M = -u^{\text{inc}}   \quad &\text{ and } \quad \dnu(u_H+u_M) = -\dnu u^{\text{inc}} \quad \text{on } \partial D  \label{vhvmeq2}.
\end{align}
where $u_H$ and $u_M$ satisfies the Sommerfeld radiation condition
\begin{align}\label{SRC1}
\lim_{r \to \infty} \sqrt{r}(\pdr{u_H} - \text{i} k u_H) = 0 \quad \text{ and } \quad \lim_{r \to \infty} \sqrt{r}(\pdr{u_M} - \text{i} k u_M) = 0
\end{align}
uniformly in $\hat{x}=x/|x|$. Since the scattered field is assumed to be radiating, it is known that it has the asymptotic behavior
\begin{align*}
u^{\text{scat}}(x, d) = \frac{\text{e}^{\text{i} \pi/4}}{\sqrt{8\pi k}}\cdot \frac{\text{e}^{{\mathrm{i}k}|x|}}{\sqrt{|x|}}u^\infty(\hat{x},d) + \mathcal{O} \left( \frac{1}{|x|^{3/2}} \right), \quad \text{as } |x| \to \infty,
\end{align*}
where $u^\infty(\hat{x}, d)$ is the far field pattern of $u^{\text{scat}}$. Note that, for $k>0$  we assume that both $u_M$ and $\pdr u_M$ decay exponentially as $r  \to \infty$, see for e.g. \cite{DongLi24}. This implies that the far field pattern for the solution to \eqref{biharmonic}--\eqref{SRC} is given by $u^\infty = u_H^\infty$, where $u_H^\infty$ is the far field pattern of $u_H$.

We can now define the so--called far field operator 
$$F: L^2(\mathbb{S}^1) \to L^2(\mathbb{S}^1) \quad \text{ given by } \quad (Fg)(x) = \int_{\mathbb{S}^1} u^\infty (\hat{x}, d) g(d) \, \dd s(d).$$
The far field operator is often used to derive qualitative reconstruction methods like the linear sampling and factorization methods, see manuscripts \cite{Cakoni-Colton-book,kirschbook}. These methods can recover the obstacle $D$ with little a prior information. As discussed in \cite{te-discret}, if we have that the far field operator $F$ is not injective then there is a generic incident field $v$ (i.e. that satisfies the Helmholtz equation in $\R^2$) that produces a trivial far field pattern for \eqref{biharmonic}--\eqref{SRC} or equivalently \eqref{vhvmeq1}--\eqref{SRC1} with $u^{\text{inc}} = v$. From Rellich's Lemma (Theorem 3.5 in \cite{Cakoni-Colton-book}) we have that the corresponding $u_H = 0$ in $\exD$. Therefore, we have that the pair $w=u_M$ in $\exD$ and $v=u^{\text{inc}}$ in $D$ satisfy 
\begin{align}
\Delta v + k^2 v = 0 \quad \text{in } D \quad &\text{and}  \quad   \Delta w - k^2 w = 0 \quad \text{in } \exD,  \label{tep1} \\
v+w = 0 \quad & \text{and } \quad  \dnu(v+w) = 0 \quad \text{on } \partial D. \label{tep2}
\end{align}
It is known that for $k>0$, that both $w$ and $\pdr{w}$ decay exponentially fast as $r \to \infty$. Therefore, we define the clamped transmission eigenvalues to be the wave numbers $k>0$ such that there is a non--trivial solution $(v, w)  \in H^1(D) \times H^1(\exD)$ to the boundary value problem \eqref{tep1}--\eqref{tep2}.

%%%%%%%%%%%%%%%%%%%%%%%%%%%%%%%%%%
\section{Existence of the Transmission Eigenvalues}\label{existence}
In this section, we will show the existence of real eigenvalues corresponding to the clamped transmission eigenvalue problem \eqref{tep1}--\eqref{tep2} by appealing to Theorem 2.3 in \cite{Cakoni-TE}. This method has been used to prove the existence of other transmission eigenvalues in previous works \cite{interiorTE,TEwObsticale}. It has been shown in \cite{te-discret} that the set of transmission eigenvalues is discrete provided that Re$(k)>0$ and Re$(k^2)>0$. It is also shown in \cite{te-discret} that any transmission eigenvalue such that Re$(k)>0$ must be real--valued. Therefore, we will show the existence of infinitely many positive clamped transmission eigenvalues.

In order to use the results in \cite{Cakoni-TE} to prove the existence of the clamped transmission eigenvalues, it is advantageous to consider \eqref{tep1}--\eqref{tep2} in a truncated domain. With this in mind, we now let $B_R \subset \R^2$ denote the ball of radius $R>0$ centered at the origin such that $D \subset B_R$ where dist$(D,\partial B_R) >0$. With this, we can consider the eigenvalue problem
\begin{align}
\Delta v + k^2 v = 0 \quad \text{in } D \quad &\text{and}  \quad   \Delta w - k^2 w = 0 \quad \text{in } \trB, \label{tep3}\\
v+w = 0 , \quad \dnu(v+w) = 0 \quad \text{on } \partial D \quad &\text{and } \quad   
T_{\text{i}k}w = \dnu w \quad \text{on } \partial B_R, \label{tep4}
\end{align}
where $T_{\text{i}k}$ denotes the exterior Dirichlet--to--Neumann (DtN) for the modified Helmholtz equation in $\R^2 \setminus \overline{B_R}$. The aforementioned DtN mapping $T_{\text{i}k}: H^{1/2}(\partial B_R) \to H^{-1/2}(\partial B_R)$ is defined by 
\begin{align}\label{DtN}
T_{\text{i}k} f = \dnu{u_f} \quad \text{ where } \quad  \Delta u_f - k^2 u_f = 0 \quad \text{in } \,\, \R^2 \setminus \overline{B_R} \quad \text{ with } \,\, u_f \big|_{\partial B_R} = f
\end{align}
where $\nu$ is the unit outward normal to $\partial B_R$. Notice that, $T_{\text{i}k}$ is the standard exterior DtN mapping for the Helmholtz equation with wave number $\text{i}k$. 

We note that, for $k>0$ we have that both $u_f$ and $\pdr u_f$ decay exponentially as $r \to \infty$ so {\color{black}no radiation condition at infinity is required} but we assume that $u_f \in H^1(\R^2 \setminus \overline{B_R})$. First, note that the eigenvalue problems \eqref{tep1}--\eqref{tep2} and \eqref{tep3}--\eqref{tep4} are equivalent. 
\begin{theorem} \label{equiv-tep}
If there is a non--trivial solution to \eqref{tep1}--\eqref{tep2} with eigenvalue $k>0$, then its restriction to $B_R$ is a non--trivial solution to \eqref{tep3}--\eqref{tep4}. Moreover, if there is a non--trivial solution to \eqref{tep3}--\eqref{tep4} with eigenvalue $k>0$, then it can be extended to $\R^2$ by \eqref{DtN} such that the extension satisfies \eqref{tep1}--\eqref{tep2}.
\end{theorem}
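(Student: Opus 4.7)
The plan is to prove the two implications separately, exploiting the fact that the map $T_{\text{i}k}$ captures, by construction, the full $H^1$ exterior behavior of modified Helmholtz solutions with prescribed Dirichlet trace on $\partial B_R$. In both directions the equations inside $D$ and the transmission conditions on $\partial D$ are inherited without change, so all of the work concentrates on the interface $\partial B_R$.

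For the forward direction, assume $(v,w)\in H^1(D)\times H^1(\exD)$ is a non--trivial solution of \eqref{tep1}--\eqref{tep2}, and I claim $(v,w|_{\trB})$ solves \eqref{tep3}--\eqref{tep4}. The PDEs on $D$ and $\trB$ as well as the conditions on $\partial D$ are immediate. For the DtN condition on $\partial B_R$, I would observe that $w$ restricted to $\R^2 \setminus \ov{B_R}$ lies in $H^1$ there, solves $\Delta w - k^2 w = 0$, and has Dirichlet trace $f := w|_{\partial B_R}$; by uniqueness of the exterior Dirichlet problem for the modified Helmholtz equation this restriction coincides with the function $u_f$ defining $T_{\text{i}k} f$ in \eqref{DtN}, so $\dnu w = T_{\text{i}k} f$ on $\partial B_R$. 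Non--triviality is preserved by unique continuation: if both $v\equiv 0$ on $D$ and $w\equiv 0$ on $\trB$, then $w$ vanishes on an open subset of the connected set $\exD$ while solving a second--order elliptic constant--coefficient equation, whence $w\equiv 0$ on $\exD$, contradicting the non--triviality of the original solution.

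For the reverse direction, assume $(v,w)\in H^1(D)\times H^1(\trB)$ is a non--trivial solution of \eqref{tep3}--\eqref{tep4}. I would extend $w$ by setting $\tilde w := w$ on $\trB$ and $\tilde w := u_f$ on $\R^2 \setminus \ov{B_R}$, where $u_f$ is the exterior solution from \eqref{DtN} with data $f = w|_{\partial B_R}$. Matching Dirichlet traces on $\partial B_R$ together with $H^1$ regularity on each side give $\tilde w \in H^1(\exD)$. It remains to check $\Delta \tilde w - k^2 \tilde w = 0$ weakly on all of $\exD$. Testing against $\varphi \in C_c^\infty(\exD)$ and applying Green's identity on $\trB$ and on $\R^2 \setminus \ov{B_R}$ separately produces only an interface contribution of the form $\int_{\partial B_R}\paren{\dnu w - T_{\text{i}k} f}\varphi \, \dd s$, which vanishes by the boundary condition in \eqref{tep4}. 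Non--triviality is immediate since $\tilde w|_{\trB} = w$, and $(v,\tilde w)$ satisfies \eqref{tep1}--\eqref{tep2} by construction.

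The main obstacle is this gluing step in the reverse direction: verifying that the piecewise--defined extension $\tilde w$ carries no spurious distributional contribution on $\partial B_R$. Once one recognizes that matching Dirichlet traces supplies the $H^1$ regularity while matching Neumann traces --- which is precisely what the DtN boundary condition encodes --- kills the jump in the weak formulation, the argument closes cleanly. The forward direction is comparatively soft, requiring only uniqueness of the exterior Dirichlet problem for modified Helmholtz and unique continuation.
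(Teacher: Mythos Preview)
Your proposal is correct and follows the same underlying idea as the paper, which dispatches the theorem in a single sentence: ``clear by the definition of the DtN mapping \eqref{DtN} and the fact that the associated exterior Dirichlet problem is well--posed.'' You simply unpack what that sentence entails --- uniqueness of the exterior modified Helmholtz Dirichlet problem for the forward direction, and the $H^1$--gluing via matching Dirichlet and Neumann traces for the reverse direction --- and you additionally justify preservation of non--triviality in the forward direction via unique continuation, a point the paper leaves implicit.
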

\begin{proof}
The proof of the claim is clear by the definition of the DtN mapping \eqref{DtN} and the fact that the associated exterior Dirichlet problem is well--posed for any $f \in H^{1/2}(\partial B_R)$. 
\end{proof} 

This implies that we can prove the existence of eigenpairs $k>0$ and $(v, w) \in  X(D,B_R)$ to \eqref{tep3}--\eqref{tep4} where the Hilbert space 
\begin{align}\label{spacex}
X(D,B_R)= \left\{ (v, w)\in H^1(D) \times H^1(\trB): (v+w)|_{\partial D}=0 \right\}
\end{align}
{\color{black}over the complex plane} with the associated inner--product 
$$\big({(\varphi_1,\varphi_2) \, ; (\psi_1, \psi_2)}\big)_{X(D, B_R)} = (  \varphi_1 , \psi_1 )_{H^1(D)} + (\varphi_2,\psi_2)_{H^1(\trB)}.$$ 
 Therefore, to study the transmission eigenvalue problem in the truncated domain. By appealing to Green's first identity we have that the equivalent variational formulation of the clamped transmission eigenvalue problem \eqref{tep3}--\eqref{tep4} is given by: find eigenpairs $k>0$ and $(v,w) \in X(D, B_R)$ such that
\begin{align}\label{TEeq}
\mathcal{A}_k \big((v,w) \, ; (\psi_1, \psi_2) \big) - k^2 \mathcal{B}\big((v,w) \, ; (\psi_1, \psi_2) \big) = 0 \quad \text{for all} \quad (\psi_1, \psi_2 ) \in X(D, B_R). 
\end{align}
Here, we define the sesquilinear form $\mathcal{A}_k: X(D, B_R)\times X(D, B_R) \longrightarrow \mathbb{C}$ such that
\begin{align}\label{seqformAk}
\mathcal{A}_k\big((v,w) \, ; (\psi_1, \psi_2) \big) = \int_{D}\nabla{v}\cdot \nabla{\overline{\psi_1}} \, \dd x 
+\int_{\trB} {\nabla{w}\cdot \nabla{\ov{\psi_2}} + k^2 w \overline{\psi_2}} \, \dd x  
- \int_{\partial B_R} \overline{\psi_2} T_{\text{i}k}w \, \dd s
\end{align}
and $\mathcal{B}: X(D, B_R)\times X(D, B_R) \longrightarrow \mathbb{C}$ is given by
\begin{align}\label{seqformB}
\mathcal{B}\big((v,w) \, ; (\psi_1, \psi_2) \big)  = \int_D v\overline{\psi_1}\, \dd x.
\end{align}
By the Riesz representation theorem, there exist two bounded linear operators associated with the sesquilinear forms denoted $\mathbb{A}_k$ and $\mathbb{B}: X(D, B_R) \longrightarrow X(D, B_R)$ such that
\begin{align}
\mathcal{A}_k\big((v,w) \, ; (\psi_1, \psi_2) \big) &=  \big(\mathbb{A}_k(v,w) \, ; (\psi_1, \psi_2) \big)_{X(D, B_R)} \label{OpAkDef} 
\end{align}
and 
\begin{align}
\mathcal{B}\big((v,w) \, ; (\psi_1, \psi_2) \big) &=\big(\mathbb{B}(v,w) \, ; (\psi_1, \psi_2) \big)_{X(D, B_R)}.\label{OpBDef}
\end{align}
Notice that $k>0$ is a clamped transmission eigenvalue if and only if the operator $\mathbb{A}_{k}-k^2 \mathbb{B}$ has a non--trivial null space.

As previously stated, in \cite{te-discret} it has been shown that the set of clamped transmission eigenvalues $k>0$ corresponding to \eqref{tep1}--\eqref{tep2} (or equivalently \eqref{tep3}--\eqref{tep4}) is at most discrete. With this, we focus on the existence of clamped transmission eigenvalues in this work. To prove this claim, we will study the analytic properties of the operators $\mathbb{A}_k$ and $\mathbb{B}$ defined in \eqref{OpAkDef}--\eqref{OpBDef}. Motivated by the results in \cite{Cakoni-TE}, we will show that the aforementioned operators satisfy the assumptions of a key result. To this end, we recall Theorem 2.3 of \cite{Cakoni-TE} which will be applied to our eigenvalue problem.
 
\begin{lemma}\label{eig_key_lemma}
Assume that the mapping $k \longmapsto \mathbb{A}_k$ is continuous from $[0,\infty)$ to the set of self--adjoint,  positive definite, bounded linear operators on the Hilbert space $X$ and assume that $\mathbb{B}$ is a self--adjoint, non--negative and compact linear operator on $X$. If we have that there exist two constants $0 \leq \tau_1 < \tau_2  < \infty$ such that
\begin{itemize}
\item[(1)] $\mathbb{A}_{\tau_1}-\tau_1^2 \mathbb{B}$ is positive on $X$, and
\item[(2)] $\mathbb{A}_{\tau_2}-\tau_2^2 \mathbb{B}$ is non--positive on a $m$--dimensional subspace of $X$,
\end{itemize}
then there exists at least $m$--values $k \in [\tau_1,\tau_2]$ such that $\mathbb{A}_{k}-k^2 \mathbb{B}$ has a non--trivial null space. 
\end{lemma}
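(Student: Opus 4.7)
The plan is to recast the condition that $\mathbb{A}_k - k^2 \mathbb{B}$ has a non-trivial null space as the condition that a continuous family of generalized eigenvalues $k \mapsto \lambda_j(k)$ crosses the parabola $k \mapsto k^2$, and then to extract the required $m$ crossings on $[\tau_1, \tau_2]$ via the intermediate value theorem. For each fixed $k$, the positive definiteness of $\mathbb{A}_k$ lets us equip $X$ with the equivalent inner product $(u,v)_k := (\mathbb{A}_k u, v)_X$. Relative to $(\cdot,\cdot)_k$, the operator $\mathbb{A}_k^{-1} \mathbb{B} : X \to X$ is self-adjoint (because $\mathbb{B}$ is), non-negative (because $(\mathbb{B} u, u)_X \geq 0$) and compact (because $\mathbb{B}$ is compact and $\mathbb{A}_k^{-1}$ is bounded). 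The spectral theorem then yields a non-increasing sequence of positive eigenvalues $\mu_1(k) \geq \mu_2(k) \geq \cdots > 0$ accumulating only at zero; setting $\lambda_j(k) := 1/\mu_j(k)$ gives the generalized eigenvalues of $\mathbb{A}_k u = \lambda \mathbb{B} u$, which admit the familiar min-max characterization
\begin{equation*}
\lambda_j(k) \;=\; \min_{\substack{V \subset X \\ \dim V = j}} \; \max_{\substack{u \in V \\ (\mathbb{B} u, u)_X > 0}} \; \frac{(\mathbb{A}_k u, u)_X}{(\mathbb{B} u, u)_X}.
\end{equation*}
By construction, $k$ is a clamped transmission eigenvalue if and only if $k^2 = \lambda_j(k)$ for some $j$, and the number of coinciding indices $j$ records the geometric multiplicity.

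I would next translate the two sign hypotheses into sign information for $f_j(k) := \lambda_j(k) - k^2$ at the endpoints. Assumption (1) means $(\mathbb{A}_{\tau_1} u, u)_X > \tau_1^2 (\mathbb{B} u, u)_X$ whenever $(\mathbb{B} u, u)_X > 0$, so the Rayleigh quotient in the min-max formula exceeds $\tau_1^2$ on every admissible $V$, whence $f_j(\tau_1) > 0$ for all $j \geq 1$. Assumption (2) provides an $m$-dimensional subspace $W_m \subset X$ on which the Rayleigh quotient is at most $\tau_2^2$; note that positive definiteness of $\mathbb{A}_{\tau_2}$ forces $W_m \cap \ker \mathbb{B} = \{0\}$, so taking any $j$-dimensional $V \subset W_m$ in the min-max formula for $j = 1, \ldots, m$ yields $f_j(\tau_2) \leq 0$.

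The main obstacle is the continuity of each $\lambda_j(k)$ on $[0,\infty)$. I would prove it directly from the min-max formula: the only $k$-dependence is in the numerator $(\mathbb{A}_k u, u)_X$, since the denominator involves only the fixed operator $\mathbb{B}$, and the hypothesized norm continuity $\|\mathbb{A}_k - \mathbb{A}_{k'}\| \to 0$ perturbs the numerator uniformly on bounded sets. This gives a standard perturbation estimate of the form $|\lambda_j(k) - \lambda_j(k')| \leq C_j \|\mathbb{A}_k - \mathbb{A}_{k'}\|$ on any subinterval where $\mu_j$ stays bounded away from zero, which is the delicate point since multiple eigenvalues may merge or split. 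With continuity in hand, the intermediate value theorem applied to each $f_j$ on $[\tau_1, \tau_2]$ produces a zero $k_j \in [\tau_1, \tau_2]$ for $j = 1, \ldots, m$. If several of these zeros coincide at some $k$, then $\lambda_j(k) = k^2$ holds for several indices $j$, so $\ker(\mathbb{A}_k - k^2 \mathbb{B})$ acquires the matching geometric multiplicity; consequently the total number of transmission eigenvalues in $[\tau_1, \tau_2]$ counted with multiplicity is at least $m$, which is the conclusion of the lemma.
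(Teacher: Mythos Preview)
Your proposal is correct and follows essentially the same approach as the paper, which only sketches the argument: introduce the generalized eigenvalues $\lambda_j(k)$ of the pencil $\mathbb{A}_k - \lambda\mathbb{B}$, deduce from hypotheses (1)--(2) that $\lambda_j(\tau_1) > \tau_1^2$ and $\lambda_j(\tau_2) \leq \tau_2^2$ for $j = 1,\ldots,m$, and then apply the intermediate value theorem to each continuous function $k \mapsto \lambda_j(k) - k^2$. Your write-up in fact supplies considerably more detail than the paper's two-line sketch, including the min--max characterization, the observation $W_m \cap \ker\mathbb{B} = \{0\}$, and the multiplicity count.
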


{\color{black}The operator $\mathbb{A}_0$ will be defined later in equation \eqref{OpA0Def} by studying the limit as $k \to 0^+$ for the DtN mapping.} This result is proven by considering the problem $\lambda_j (k)-k^2=0$ where $\lambda_j(k)$ are the generalized eigenvalues such that $\mathbb{A}_k - \lambda_j(k) \mathbb{B}$ has a non--trivial null space. Since each $\lambda_j(k)$ is continuous with respect to $k \geq 0$, the  positivity and non--positivity requirements in Lemma \ref{eig_key_lemma} implies that 
$$\lambda_j (\tau_1) > \tau_1^2 \quad \text{ and } \quad \lambda_j (\tau_2) \leq \tau_2^2 \quad \text{ for $j = 1, \ldots , m$.}$$
By appealing to the intermediate value theorem each equation $\lambda_j (k)-k^2=0$ has a solution.

With this, we turn our attention to proving that Lemma \ref{eig_key_lemma} can be applied to our eigenvalue problem. This requires us to study the analytical properties of the operators $\mathbb{A}_k$ and $\mathbb{B}$. To this end, notice that from the definition of $T_{\text{i}k}$ we have that 
$$ - \int_{\partial B_R} \overline{\psi_2} T_{\text{i}k}w \, \dd s = \int_{ \R^2 \setminus \overline{B_R} } {\nabla{u_w}\cdot \nabla{\ov{u_{\psi_2}}} + k^2 u_w \overline{u_{\psi_2}}} \, \dd x, $$ 
where $u_{w}$ and  $u_{\psi_2}$ are as defined by \eqref{DtN}. Therefore, we see that the sequilinear form in \eqref{seqformAk} can be written as 
\begin{align*}
\mathcal{A}_k\big((v,w) \, ; (\psi_1, \psi_2) \big) = \int_{D}\nabla{v}\cdot \nabla{\overline{\psi_1}} \, \dd x 
&+\int_{\trB} {\nabla{w}\cdot \nabla{\ov{\psi_2}} + k^2 w \overline{\psi_2}} \, \dd x  \\
&\hspace{0.5in}+ \int_{ \R^2 \setminus \overline{B_R} } {\nabla{u_w}\cdot \nabla{\ov{u_{\psi_2}}} + k^2 u_w \overline{u_{\psi_2}}} \, \dd x.
\end{align*}
We also note that in order to use Lemma \ref{eig_key_lemma} we need to prove that the mapping $k \longmapsto T_{\text{i}k}$ depends continuously on $k>0$. This is given in the following result. 

\begin{theorem}\label{opDtNthm}
The map $k \longmapsto T_{\mathrm{i}k}$ defined by \eqref{DtN} is continuous with respect to $k > 0$. 
\end{theorem}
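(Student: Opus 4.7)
The plan is to establish the quantitative Lipschitz-type estimate
$$\|T_{\text{i}k} - T_{\text{i}k'}\|_{H^{1/2}(\partial B_R) \to H^{-1/2}(\partial B_R)} \leq C(k,k') |k - k'|,$$
where $C(k,k')$ remains bounded on any compact subset of $(0,\infty) \times (0,\infty)$, from which continuity of $k \longmapsto T_{\text{i}k}$ at each $k > 0$ follows immediately. Fix $f \in H^{1/2}(\partial B_R)$ with $\|f\|_{H^{1/2}} = 1$ and let $u_k, u_{k'} \in H^1(\R^2 \setminus \overline{B_R})$ be the exterior Dirichlet solutions from \eqref{DtN} at wave numbers $\text{i}k$ and $\text{i}k'$. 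The sesquilinear form $\int (\nabla u \cdot \nabla \overline{v} + k^2 u \overline{v}) \, \dd x$ is bounded and coercive on $H^1(\R^2 \setminus \overline{B_R})$, so by a standard lift-and-Lax--Milgram argument the problem is well-posed and one obtains the a priori estimate $\|u_k\|_{H^1(\R^2 \setminus \overline{B_R})} \leq C(k) \|f\|_{H^{1/2}}$.

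Step 1 is an energy estimate for the difference $\delta := u_k - u_{k'}$, which has vanishing Dirichlet trace on $\partial B_R$, decays exponentially at infinity, and satisfies
$$\Delta \delta - k^2 \delta = (k^2 - k'^2) u_{k'} \quad \text{in } \R^2 \setminus \overline{B_R}.$$
Testing against $\overline{\delta}$ and integrating by parts over an expanding ball (the boundary contribution at infinity vanishes by exponential decay, and the one on $\partial B_R$ vanishes because $\delta|_{\partial B_R} = 0$), I obtain
$$\int_{\R^2 \setminus \overline{B_R}} \bigl( |\nabla \delta|^2 + k^2 |\delta|^2 \bigr) \, \dd x = -(k^2 - k'^2) \int_{\R^2 \setminus \overline{B_R}} u_{k'} \overline{\delta} \, \dd x.$$
Combining Cauchy--Schwarz, the coercivity constant $\min(1, k^2)$, and the Step 0 bound on $\|u_{k'}\|_{H^1}$, this yields $\|\delta\|_{H^1(\R^2 \setminus \overline{B_R})} \leq C(k, k') |k - k'| \|f\|_{H^{1/2}}$.

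Step 2 upgrades this to a bound on $\partial_\nu \delta$ in $H^{-1/2}(\partial B_R)$. Since $\Delta \delta = k^2 \delta + (k^2 - k'^2) u_{k'}$ lies in $L^2(\R^2 \setminus \overline{B_R})$, I may restrict to any bounded annulus $A = B_{R'} \setminus \overline{B_R}$ with $R' > R$ and invoke the standard Neumann trace estimate for Lipschitz domains,
$$\|\partial_\nu \delta\|_{H^{-1/2}(\partial B_R)} \leq C \bigl( \|\delta\|_{H^1(A)} + \|\Delta \delta\|_{L^2(A)} \bigr).$$
Plugging in $\|\Delta \delta\|_{L^2} \leq k^2 \|\delta\|_{L^2} + |k^2 - k'^2|\, \|u_{k'}\|_{L^2}$ together with the Step 1 estimate, and noting that by definition $T_{\text{i}k} f - T_{\text{i}k'} f = \partial_\nu \delta$, this completes the proof.

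The main obstacle is that all the analysis in Steps 0--1 takes place on the unbounded exterior of $B_R$, so well-posedness, the a priori bound, and the integration by parts identity must each be justified via a truncation-and-limit argument, leveraging the exponential decay of solutions to the modified Helmholtz equation. As an alternative more concrete route, since $\partial B_R$ is a circle one could expand $f$ in Fourier series and diagonalize $T_{\text{i}k}$ with multipliers $\mu_n(k) = k K_n'(kR)/K_n(kR)$; continuous dependence of each $\mu_n$ on $k$ and uniform control in $n$ via the known asymptotics of modified Bessel functions also yields the conclusion, but the variational approach above is more robust.
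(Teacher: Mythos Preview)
Your proof is correct and follows essentially the same variational strategy as the paper: establish an a priori bound on the exterior Dirichlet solution, derive an $H^1$ energy estimate for the difference $\delta = u_k - u_{k'}$ from the equation $\Delta\delta - k^2\delta = (k^2 - k'^2)u_{k'}$ with zero boundary trace, and then upgrade to control of the Neumann trace. The only substantive difference is in this last step: the paper avoids citing the Neumann trace theorem by instead pairing $(T_{\text{i}k}-T_{\text{i}\tau})f$ against an arbitrary $g\in H^{1/2}(\partial B_R)$, inserting the auxiliary solution $u^k_g$ via Green's identity, and then taking the supremum over $g$ to read off the $H^{-1/2}$ operator norm. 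Your route through the trace estimate on a bounded annulus is equally valid and arguably more direct, while the paper's duality argument is slightly more self-contained. Your remark about the alternative Fourier/Bessel diagonalization is also apt; the paper in fact uses exactly that representation later when handling the limit $k\to 0^+$.
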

\begin{proof}
We first note that by the well--posedness of 
$$\Delta u^k_f - k^2 u^k_f = 0 \quad \text{in } \,\, \R^2 \setminus \overline{B_R} \quad \text{ with } \quad u^k_f \big|_{\partial B_R} = f$$
for any $f \in H^{1/2}(\partial B_R)$ there is a constant $C_k > 0$ such that 
$$ \| u^k_f \|_{H^1(\R^2 \setminus \overline{B_R} )} \leq C_k \| f \|_{H^{1/2}(\partial B_R)}.$$
Notice that $C_k$ can depend on $k>0$ but not $f$ and we make the dependance of the solution with respect to $k$ explicit. We will now employ a variational argument to prove that 
$$ \| T_{\mathrm{i}k} - T_{\mathrm{i}\tau} \| \longrightarrow 0 \quad \text{as} \quad \tau \to k$$
where $\| \cdot \|$ is the appropriate operator norm.

Now, to continue we let $u^k_f$ and $u^\tau_f$ satisfy the above boundary value problem with `wave numbers' $k>0$ and $\tau>0$, respectively. Notice that 
$$\Delta (u^k_f - u^\tau_f) =  k^2 u^k_f -\tau^2 u^\tau_f \quad \text{in } \,\, \R^2 \setminus \overline{B_R} \quad \text{ with } \,\, (u^k_f - u^\tau_f) \big|_{\partial B_R} = 0.$$
By Green's first identity, it is clear that for any $\varphi \in H_0^1(\R^2 \setminus \overline{B_R} )$ we obtain the equalities 
\begin{align*}
\int_{\R^2 \setminus \overline{B_R}} \grad (u^k_f - u^\tau_f) \cdot \grad \overline{\varphi} \, \dd x &= - \int_{\R^2 \setminus \overline{B_R}} (k^2 u^k_f -\tau^2 u^\tau_f) \overline{\varphi} \, \dd x \\
&=-\int_{\R^2 \setminus \overline{B_R}} \tau^2 (u^k_f -u^\tau_f)\overline{\varphi}+(k^2-\tau^2)u^k_f \overline{\varphi} \, \dd x.
\end{align*}
By letting $\varphi = (u^k_f -u^\tau_f)$ we get the estimate
\begin{align*}
\| u^k_f - u^\tau_f \|_{H^1(\R^2 \setminus \overline{B_R} )} & \leq \frac{|k^2-\tau^2|}{\min\{1,\tau^2\}} \| u^k_f \|_{L^2(\R^2 \setminus \overline{B_R} )} \leq  \frac{C_k }{\min\{1,\tau^2\}} |k^2-\tau^2| \, \| f \|_{H^{1/2}(\partial B_R)}.
\end{align*}
To proceed, we let $f,g \in H^{1/2}(\partial B_R)$ and consider   
\begin{align*}
-\int_{\partial B_R} \overline{g} \big(T_{\text{i}k} - T_{\text{i}\tau} \big)f \, \dd s &= -\int_{\partial B_R} \overline{u^k_g} \dnu (u^k_f - u^\tau_f)\, \dd s\\
	&=\int_{\R^2 \setminus \overline{B_R}} \grad (u^k_f - u^\tau_f) \cdot \grad \overline{u^k_g}  + (k^2 u^k_f -\tau^2 u^\tau_f) \overline{u^k_g} \, \dd x \\
&=\int_{\R^2 \setminus \overline{B_R}} \grad (u^k_f - u^\tau_f) \cdot \grad \overline{u^k_g}  + (k^2 -\tau^2) u^k_f  \overline{u^k_g} + \tau^2 (u^k_f - u^\tau_f) \overline{u^k_g} \, \dd x.
\end{align*}
Therefore, we {\color{black}obtain the estimate} 
\begin{align*}
\left| \int_{\partial B_R} \overline{g} \big(T_{\text{i}k} - T_{\text{i}\tau} \big)f \, \dd s \right| & \leq  \Bigg\{ \| u^k_f - u^\tau_f \|_{H^1(\R^2 \setminus \overline{B_R} )} + |k^2-\tau^2| \| u^k_f  \|_{L^2(\R^2 \setminus \overline{B_R} )} \\
& \hspace{1.75in}+ \tau^2 \| u^k_f - u^\tau_f \|_{L^2(\R^2 \setminus \overline{B_R} )} \Bigg\} \| u^k_g  \|_{H^1(\R^2 \setminus \overline{B_R} )} .
\end{align*}
By appealing to our above estimates, we have that 
\begin{align*}
\left| \int_{\partial B_R} \overline{g} \big(T_{\text{i}k} - T_{\text{i}\tau} \big)f \, \dd s \right|  \leq \frac{C_k^2}{\min\{1,\tau^2\}} \bigg\{1+\min\{1,\tau^2\}+\tau^2 \bigg\}|k^2-\tau^2|  \| f \|_{H^{1/2}(\partial B_R)} \| g \|_{H^{1/2}(\partial B_R)}. 
\end{align*}
Taking the supremum over $f$ and $g$ with unit norm, we obtain the bound
$$\|T_{\mathrm{i}k} - T_{\mathrm{i}\tau}\| \leq \frac{C_k^2}{\min\{1,\tau^2\}}\bigg\{1+\min\{1,\tau^2\}+\tau^2 \bigg\}|k^2-\tau^2|.$$
Due to the fact that the pre--factor ${\displaystyle \frac{C_k^2}{\min\{1,\tau^2\}}\Big\{1+\min\{1,\tau^2\}+\tau^2 \Big\} }$
is bounded as $\tau \to k$ for any $k>0$ we have that 
$$\|T_{\mathrm{i}k} - T_{\mathrm{i}\tau}\| \leq  \frac{C_k^2}{\min\{1,\tau^2\}}\Big\{1+\min\{1,\tau^2\}+\tau^2 \Big\}|k^2-\tau^2| \longrightarrow 0 \quad \text{as} \quad \tau \to k$$
which proves the claim. 
\end{proof}

With the above analysis we can now prove that the operators $\mathbb{A}_k$ and $\mathbb{B}$ satisfy the assumptions of Lemma \ref{eig_key_lemma}. We prove that each operator satisfies the assumptions in the following two results. 

\begin{theorem}\label{opAkthm}
The map $k \longmapsto \mathbb{A}_k$ defined by \eqref{OpAkDef} is continuous with respect to $k > 0$. Furthermore, the operator $\mathbb{A}_k$ is self--adjoint and coercive on $X(D,B_R)$ for $k > 0$.
\end{theorem}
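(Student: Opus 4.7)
The plan is to work with the equivalent volume form of $\mathcal{A}_k$ already derived in the excerpt, which recasts the boundary term $-\int_{\partial B_R}\overline{\psi_2}T_{\mathrm{i}k}w\,\mathrm{d}s$ as the exterior Dirichlet--type energy $\int_{\R^2\setminus\overline{B_R}}\nabla u_w\cdot\nabla\overline{u_{\psi_2}} + k^2 u_w\overline{u_{\psi_2}}\,\mathrm{d}x$ of the modified--Helmholtz extensions $u_w, u_{\psi_2}$. In this form $\mathcal{A}_k$ is a sum of three Hermitian sesquilinear integrals, each manifestly real and nonnegative on the diagonal. Self--adjointness of $\mathbb{A}_k$ then follows immediately from the swap--and--conjugate symmetry of each integrand combined with the Riesz identification \eqref{OpAkDef}.

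For continuity in $k$, I would write
\begin{align*}
\mathcal{A}_k\big((v,w);(\psi_1,\psi_2)\big) - \mathcal{A}_\tau\big((v,w);(\psi_1,\psi_2)\big) = (k^2-\tau^2)\int_{\trB} w\overline{\psi_2}\,\mathrm{d}x - \int_{\partial B_R}\overline{\psi_2}\big(T_{\mathrm{i}k}-T_{\mathrm{i}\tau}\big)w\,\mathrm{d}s.
\end{align*}
The first term is dominated by $|k^2-\tau^2|\,\|(v,w)\|_{X(D,B_R)}\|(\psi_1,\psi_2)\|_{X(D,B_R)}$. For the second, the continuity of the trace $H^1(\trB)\to H^{1/2}(\partial B_R)$ together with Theorem \ref{opDtNthm} yields $\|T_{\mathrm{i}k}-T_{\mathrm{i}\tau}\|\to 0$ as $\tau\to k$. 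Taking suprema over unit test pairs then gives $\|\mathbb{A}_k-\mathbb{A}_\tau\|\to 0$.

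Coercivity is the main obstacle. Evaluating on the diagonal and discarding the nonnegative exterior contribution gives
\begin{align*}
\mathcal{A}_k\big((v,w);(v,w)\big) \geq \|\nabla v\|_{L^2(D)}^2 + \min\{1,k^2\}\,\|w\|_{H^1(\trB)}^2,
\end{align*}
which controls every component of the $X(D,B_R)$--norm except $\|v\|_{L^2(D)}$. To recover the missing piece, I would exploit the constraint $v|_{\partial D}=-w|_{\partial D}$ built into $X(D,B_R)$ together with the trace--Poincar\'e inequality
\begin{align*}
\|v\|_{L^2(D)}^2 \leq C\big(\|\nabla v\|_{L^2(D)}^2 + \|v\|_{L^2(\partial D)}^2\big),
\end{align*}
which follows from a standard Rellich--plus--compact--trace contradiction argument on $H^1(D)$. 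Substituting the boundary condition and applying the continuous trace $H^1(\trB)\to L^2(\partial D)$ converts the boundary term into a multiple of $\|w\|_{H^1(\trB)}^2$, and combining with the diagonal lower bound produces a constant $\alpha(k)>0$ with $\mathcal{A}_k\big((v,w);(v,w)\big) \geq \alpha(k)\,\|(v,w)\|_{X(D,B_R)}^2$, which is the required coercivity. The coercivity constant will degenerate as $k\to 0^+$ through the factor $\min\{1,k^2\}$, but this is harmless for the stated range $k>0$.
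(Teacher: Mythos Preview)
Your proof is correct. For continuity and self--adjointness you follow essentially the paper's route; the paper phrases self--adjointness via the real--valuedness of $(\mathbb{A}_k(v,w);(v,w))_{X(D,B_R)}$ on a complex Hilbert space rather than via explicit Hermitian symmetry of the form, but these are equivalent observations.

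Your coercivity argument differs from the paper's in presentation. You argue directly, invoking the trace--Poincar\'e inequality $\|v\|_{L^2(D)}^2 \leq C\big(\|\nabla v\|_{L^2(D)}^2 + \|v\|_{L^2(\partial D)}^2\big)$ and then using $v|_{\partial D}=-w|_{\partial D}$ together with the trace bound on $w$ to close the estimate. The paper instead runs the contradiction inline: assuming a normalized sequence $(v_n,w_n)$ with $\mathcal{A}_k\big((v_n,w_n);(v_n,w_n)\big)\to 0$, it extracts a weak limit, deduces $w=0$ and $v$ constant, and then uses the coupling $v=-w$ on $\partial D$ to force $v=0$, contradicting the normalization. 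Your approach is a clean repackaging---the same compactness ingredients underlie the trace--Poincar\'e inequality you invoke---and has the mild advantage of making an explicit coercivity constant $\alpha(k)\sim\min\{1,k^2\}$ visible, whereas the paper's argument yields only existence of such a constant.
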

\begin{proof}
To prove the claim, we start with the continuity with respect to $k > 0$. So assume that $k>0$ and $\tau >0$, then for any $(v, w)$ and $(\psi_1, \psi_2)\in X(D,B_R)$, we have that 
$$\left| \paren{\paren{\mathbb{A}_{{k}} - \mathbb{A}_{{\tau}}}(v,w); (\psi_1, \psi_2)}_{X(D,B_R)} \right|  = \left|   \int_{\trB} \paren{k^2 - \tau^2} w \overline{\psi_2} \,   \dd x - \int_{\partial B_R} \overline{\psi_2} \big(T_{\text{i}k} - T_{\text{i}\tau}\big)w \, \dd s \right|.$$
Therefore, we can obtain the estimate 
$$ \|\mathbb{A}_{{k}} - \mathbb{A}_{{\tau}}\| \leq C \left( |k^2 - \tau^2| + \|T_{\text{i}k} - T_{\text{i}\tau} \| \right),$$
where $\| \cdot \|$ denotes the appropriate operator norms. By Theorem \ref{opDtNthm} we have that $ \|\mathbb{A}_{{k}} - \mathbb{A}_{{\tau}}\| \longrightarrow 0$ as $\tau \to k$, proving the continuity. 

Now, to prove that the operator is self--adjoint, by appealing to the definition of the sesquilinear form in \eqref{seqformAk} we notice that 
\begin{align*}
\paren{\mathbb{A}_{k}(v,w);(v,w)}_{X(D,B_R)} &= \int_D |\nabla v|^2  \, \dd x + \int_{\trB} {|\nabla w|^2 +k^2  {\color{black}|w|^2}} \, \dd x -  \int_{\partial B_R} \overline{w} T_{\text{i}k} w \, \dd s  \\
&=\int_D |\nabla v|^2  \, \dd x + \int_{\trB} {|\nabla w|^2 +k^2  {\color{black}|w|^2}} \, \dd x + \int_{\exD} {|\nabla u_w|^2 + k^2 |u_w|^2}\dd x.
\end{align*}
Again, we note that the extension $u_w$ is defined by \eqref{DtN}. Since the above expression is non--negative for all $(v, w) \in X(D,B_R)$ with $k>0$ and $X(D,B_R)$ is assumed to be a complex Hilbert space, we have that $\mathbb{A}_{k}$ is self--adjoint.
With the above equality, we can prove the coercivity of the operator $\mathbb{A}_{k}$ for all $k>0$. For a contradiction, assume $\mathbb{A}_{k}$ is not coercive for some $k>0$, then there exists a sequence $\left\{ (v_n, w_n) \right\}_{n \in \mathbb{N}}$ in the Hilbert space $X(D,B_R)$ with $\|(v_n, w_n)\|_{X(D,B_R)} = 1$ for each $n$ such that 
\begin{align*}
\frac{1}{n} \geq    \paren{\mathbb{A}_{k}(v_n ,w_n ) \, ;(v_n ,w_n)}_{X(D,B_R)}  \geq \|\nabla v_n \|^2_{L^2(D)} + \|\nabla w_n\|^2_{L^2(\trB)} + k^2 \|w_n\|^2_{L^2(\trB)}.
\end{align*}
Since the sequence is bounded, we have that (up to a subsequence) it is weakly convergent to some $(v , w ) \in {X(D,B_R)} $. This implies that $w_n \to w=0$ in $H^1(\trB)$ and that $| \grad v_n| \to |\grad v| = 0$ in $L^2(D)$ as $n \to \infty$. Therefore, we see that the limiting function $v$ is constant. By appealing to the compact embedding $H^1(D)$ into $L^2(D)$ we have that (up to a subsequence) $v_n$ is strongly convergent in $H^1(D)$.  The boundary condition $v=-w$ on $\partial D$ implies that  $v=0$ in $D$. The convergence $(v_n, w_n) \to (0,0)$ in $X(D,B_R)$ as $n \to \infty$ contradicts the fact that $\|(v_n,w_n)\|_{X(D,B_R)}=1$. This proves the coercivity and therefore proves the claim.
\end{proof}

\begin{theorem}\label{opBthm}
The operator $\mathbb{B}$ defined by \eqref{OpBDef} is a self--adjoint, compact and non--negative. 
\end{theorem}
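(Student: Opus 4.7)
The plan is to read all three properties directly off the definition
$$\mathcal{B}\big((v,w);(\psi_1,\psi_2)\big) = \int_D v\,\overline{\psi_1}\,\dd x,$$
which only sees the first component in $L^2(D)$.

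For self--adjointness, I would compute
$$\overline{\mathcal{B}\big((\psi_1,\psi_2);(v,w)\big)} = \overline{\int_D \psi_1\,\overline{v}\,\dd x} = \int_D v\,\overline{\psi_1}\,\dd x = \mathcal{B}\big((v,w);(\psi_1,\psi_2)\big),$$
so the sesquilinear form is Hermitian. By the Riesz identification in \eqref{OpBDef}, this gives
$\big(\mathbb{B}(v,w);(\psi_1,\psi_2)\big)_{X(D,B_R)} = \overline{\big(\mathbb{B}(\psi_1,\psi_2);(v,w)\big)}_{X(D,B_R)}$, i.e.\ $\mathbb{B}^* = \mathbb{B}$. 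For non--negativity, the same identity with test element $(v,w)$ gives
$$\big(\mathbb{B}(v,w);(v,w)\big)_{X(D,B_R)} = \int_D |v|^2 \, \dd x = \|v\|_{L^2(D)}^2 \geq 0.$$

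The only substantive point is compactness, and this is where I would use the compact embedding $H^1(D) \hookrightarrow L^2(D)$ (guaranteed since $D$ is bounded with smooth boundary). Let $\{(v_n,w_n)\}_{n\in\N}$ be bounded in $X(D,B_R)$; then $\{v_n\}$ is bounded in $H^1(D)$, so a subsequence (still denoted $v_n$) converges strongly in $L^2(D)$ to some $v$. For any unit test element $(\psi_1,\psi_2) \in X(D,B_R)$,
$$\left| \big((\mathbb{B}(v_n,w_n) - \mathbb{B}(v_m,w_m));(\psi_1,\psi_2)\big)_{X(D,B_R)} \right| = \left| \int_D (v_n - v_m)\overline{\psi_1}\, \dd x \right| \leq \|v_n - v_m\|_{L^2(D)}.$$
Taking the supremum over such $(\psi_1,\psi_2)$ gives $\|\mathbb{B}(v_n,w_n) - \mathbb{B}(v_m,w_m)\|_{X(D,B_R)} \leq \|v_n - v_m\|_{L^2(D)} \to 0$, so $\{\mathbb{B}(v_n,w_n)\}$ is Cauchy in $X(D,B_R)$ and therefore convergent. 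This establishes compactness.

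No step is really hard here; the whole argument is essentially an observation that $\mathbb{B}$ factors through the compact inclusion $H^1(D) \hookrightarrow L^2(D)$, and the only thing to watch is that the second component $w$ plays no role in $\mathcal{B}$, so the fact that $H^1(B_R\setminus\overline{D})$ embeds compactly into $L^2$ is not even needed. With these three properties verified, combined with Theorem \ref{opAkthm}, the operators $\mathbb{A}_k$ and $\mathbb{B}$ satisfy the hypotheses of Lemma \ref{eig_key_lemma}, setting up the subsequent application to prove existence of infinitely many real clamped transmission eigenvalues.
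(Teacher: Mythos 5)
Your proposal is correct and follows essentially the same route as the paper: read self--adjointness and non--negativity off the Hermitian form $\mathcal{B}\big((v,w);(v,w)\big)=\|v\|_{L^2(D)}^2$, and obtain compactness from the bound $\|\mathbb{B}(v,w)\|_{X(D,B_R)}\leq\|v\|_{L^2(D)}$ together with the compact embedding of $H^1(D)$ into $L^2(D)$. Your sequence/Cauchy argument just spells out in more detail what the paper states in one line, so there is nothing to add.
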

\begin{proof}
It is clear that from the sequilinear form \eqref{seqformB} that $\mathbb{B}$ given by \eqref{OpBDef} is self--adjoint and non--negative since $\mathcal{B}\big((v,w) \, ; (v,w) \big) = \|v\|^2_{L^2(D)}$. For the compactness, by definition we can easily obtain that $\|\mathbb{B}(v, w)\|_{X(D,B_R)}\leq \|v\|_{L^2(D)}$. The compact embedding of $H^1(D)$ into $L^2(D)$ proves the claim, since $D$ is a bounded domain with a {\color{black} Lipschitz} boundary. 
\end{proof}

Lastly, we need to consider the operator $\mathbb{A}_0$. To this end, we must study the DtN mapping further, since as it is defined we require $k>0$. In particular, we need to resolve the limit as $k \to 0^+$. With this in mind, we note that  it has been shown in \cite{FEM-wDtN1,BEM-FEMtransproblem} that 
$$T_{\text{i}k}f = \sum\limits_{n=-\infty}^{\infty} \text{i}k \frac{ H^{(1)'}_{|n|}(\text{i}k R)}{H^{(1)}_{|n|}(\text{i}k R)} f_n \text{e}^{\text{i}n \theta} \quad \text{where} \quad f(\theta) =\sum\limits_{n=-\infty}^{\infty} f_n \text{e}^{\text{i}n \theta}$$ 
where, $H^{(1)}_{\ell}$ denotes the first kind Hankel function of order $\ell$ and $f_n$ are the Fourier coefficients for $f \in H^{1/2}(\partial B_R)$. Note that above we have used the fact that 
$$H^{(1)}_{-\ell}(z) = (-1)^\ell H^{(1)}_{\ell}(z) \quad \text{ for all } \quad \ell \in \N .$$
Recall that $H^{p}(\partial B_R)$ can be identified with $H^{p}(0,2\pi)$ via the Fourier series expansion with the associated norm for any $p \in \R$. As in \cite{resonances-wDtN} the well--known recurrence relationship of the Hankel functions and their derivatives (see for e.g. \cite{bessel-webpage}) gives that
$$\text{i}k \frac{ H^{(1)'}_{|n|}(\text{i}k R)}{H^{(1)}_{|n|}(\text{i}k R)} = \gamma_{|n|}(k) -\frac{|n|}{R}, \quad \text{ where we let} \quad \gamma_{|n|}(k) =  \text{i}k  \frac{H^{(1)}_{|n|-1}(\text{i}k R)}{H^{(1)}_{|n|}(\text{i}k R)}.$$
With this, we define 
\begin{align}\label{DtNk=0}
T_{0}: H^{1/2}(\partial B_R) \to H^{-1/2}(\partial B_R) \quad\text{such that } \quad T_{0} f = - \sum\limits_{n=-\infty}^{\infty}  \frac{|n|}{R} f_n \text{e}^{\text{i}n \theta}.
\end{align}

The operator defined in \eqref{DtNk=0} can be seen as the DtN mapping for $k=0$. In order to prove this claim, we show that $\gamma_{|n|}(k) \to 0$ as $k \to 0^+$ for all $n \in \Z$. Indeed, by the asymptotic limits of the Hankel functions (see for e.g. \cite{bessel-webpage}) we have that 
$$H^{(1)}_{0}(z) \sim \frac{2 \text{i}}{\pi} \ln z \quad \text{ and } \quad H^{(1)}_{\ell}(z)  \sim - \frac{\text{i}}{\pi} \ell! \left( \frac{z}{2}\right)^{- \ell}$$ 
for all $\ell \in \N$ as $z \to 0$. Therefore, after some simple calculations we have that 
$$\gamma_{0}(k)  \sim \frac{1}{R \ln (\text{i}k R)}\, , \quad \gamma_{|\pm 1|}(k)  \sim  k^2 R \ln (\text{i}k R) \quad \text{ and } \quad \gamma_{|n|}(k)  \sim - \frac{k^2 R}{2 |n|} \quad \text{ when $|n| >1$ }$$ 
as $k \to 0^+$. With the above asymptotics we have the following result.

\begin{theorem}\label{opDtNthm2}
The map $k \longmapsto T_{\mathrm{i}k}$ defined by \eqref{DtN} satisfies 
$$ \|T_{\mathrm{i}k} - T_{0} \| \longrightarrow 0  \quad \text{ as } \quad k \to 0^+$$
where $\| \cdot \|$ is the operator norm from $H^{1/2}(\partial B_R) \to H^{-1/2}(\partial B_R)$. 
\end{theorem}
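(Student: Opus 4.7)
My approach is to diagonalize $T_{\text{i}k} - T_0$ in the Fourier basis on the circle $\partial B_R$, reducing the operator-norm bound to a scalar supremum over modes, and then to control that supremum uniformly as $k \to 0^+$. The decisive input will be the asymptotic formulas for $\gamma_{|n|}(k)$ already recorded in the excerpt, upgraded to a uniform-in-$n$ estimate for the tail of the series.

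Using the Fourier-series representations of $T_{\text{i}k}$ and $T_0$ stated just above, subtraction yields
$$(T_{\text{i}k} - T_0) f = \sum_{n \in \Z} \gamma_{|n|}(k)\, f_n\, \text{e}^{\text{i}n\theta}$$
for $f = \sum_n f_n \text{e}^{\text{i}n \theta} \in H^{1/2}(\partial B_R)$. Under the standard identification of $H^s(\partial B_R)$ with the weighted sequence space equipped with norm $\sum_n (1+n^2)^s |f_n|^2$, the operator norm of a Fourier multiplier with symbol $m(n)$ from $H^{1/2}$ to $H^{-1/2}$ is exactly $\sup_n |m(n)|/(1+n^2)^{1/2}$. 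Hence
$$\|T_{\text{i}k} - T_0\|_{H^{1/2} \to H^{-1/2}} = \sup_{n \in \Z} \frac{|\gamma_{|n|}(k)|}{(1+n^2)^{1/2}},$$
and the task reduces to showing this supremum tends to zero as $k \to 0^+$, which I do by splitting at $|n| = 2$.

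For the low modes $n \in \{-1, 0, 1\}$, the pointwise asymptotics $\gamma_0(k) \sim (R\ln(\text{i}kR))^{-1}$ and $\gamma_{|\pm 1|}(k) \sim k^2 R \ln(\text{i}kR)$ recorded just before the statement each tend to $0$ as $k \to 0^+$, so these three contributions vanish in the limit. For the tail $|n| \geq 2$, using $(1+n^2)^{1/2} \geq |n|$ it suffices to establish a uniform estimate
$$|\gamma_{|n|}(k)| \leq \frac{C k^2 R}{|n|}, \qquad |n| \geq 2, \quad 0 < k \leq k_0,$$
with $C$ independent of $n$; this would give $\sup_{|n| \geq 2} |\gamma_{|n|}(k)|/(1+n^2)^{1/2} \leq Ck^2R/4 \to 0$, which combined with the low-mode analysis completes the proof. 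The main obstacle is the \emph{uniformity in $n$}: the asymptotic $\gamma_{|n|}(k) \sim -k^2 R/(2|n|)$ stated in the excerpt is only modewise, and a priori the error constants could blow up as $|n| \to \infty$. To obtain the uniform bound I would insert the small-argument series for $J_{|n|}$ and $Y_{|n|}$ into the ratio $H^{(1)}_{|n|-1}(\text{i}kR)/H^{(1)}_{|n|}(\text{i}kR)$ and exploit the fact that the dominant singular parts $-(|n|-2)!\,(\text{i}kR/2)^{-(|n|-1)}/\pi$ and $-(|n|-1)!\,(\text{i}kR/2)^{-|n|}/\pi$ combine cleanly to produce the ratio $\text{i}kR/(2(|n|-1))$, while the sub-leading corrections carry a factor $1 + \mathcal{O}(k^2 R^2/(|n|-1))$ that is uniformly bounded for $|n| \geq 2$ and $k$ small; multiplying through by $\text{i}k$ and taking absolute values then yields the desired uniform bound.
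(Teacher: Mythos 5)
Your proposal is correct and follows essentially the same route as the paper: diagonalize $T_{\mathrm{i}k}-T_{0}$ in the Fourier basis on $\partial B_R$, treat finitely many low modes with the stated asymptotics of $\gamma_{|n|}(k)$, and control the remaining modes through the $\mathcal{O}(k^2/|n|)$ decay. You are in fact more explicit than the paper about the one delicate point, namely the uniformity in $n$ of the tail estimate (the paper invokes the asymptotics only modewise), and your sketched small-argument Bessel expansion (equivalently, standard bounds on the ratios $K_{|n|-1}(kR)/K_{|n|}(kR)$ of modified Bessel functions) does supply that uniform bound.
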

\begin{proof}
Here, we can prove the claim by appealing to the series representation of the DtN mappings. Indeed, from the above discussion we have that 
$$(T_{\mathrm{i}k} - T_{0})f =  \sum\limits_{n=-\infty}^{\infty} \gamma_{|n|}(k) f_n \text{e}^{\text{i}n \theta} \quad \text{ where again we let} \quad \gamma_{|n|}(k) =  \text{i}k  \frac{H^{(1)}_{|n|-1}(\text{i}k R)}{H^{(1)}_{|n|}(\text{i}k R)}.$$
Now, for the space $H^{-1/2}(\partial B_R)$, it can be identified with $H^{-1/2}(0,2\pi)$, which implies that
\begin{align*}
2\pi \big\| (T_{\mathrm{i}k} - T_{0})f \big\|^2_{H^{-1/2}(0,2\pi)} &=  \sum\limits_{n=-\infty}^{\infty} \frac{|\gamma_{|n|}(k)|^2}{(1+|n|^2)^{1/2}} |f_n|^2 \\
	&=\sum\limits_{n=-N}^{N} \frac{|\gamma_{|n|}(k)|^2}{(1+|n|^2)^{1/2}} |f_n|^2 + \sum\limits_{|n|\geq N+1} \frac{|\gamma_{|n|}(k)|^2}{(1+|n|^2)^{1/2}} |f_n|^2
\end{align*}
for any $N \in \N$. Let $N$ be fixed and let $k$ be sufficiently small, then we have that
\begin{align*}
2\pi \big\| (T_{\mathrm{i}k} - T_{0})f \big\|^2_{H^{-1/2}(0,2\pi)} &\leq \max_{0\leq |n| \leq N} \left\{ |\gamma_{|n|}(k)|^2 \right\}\sum\limits_{n=-N}^{N} \frac{1}{(1+|n|^2)^{1/2}}  |f_n|^2 \\
	& \hspace{1.5in}+ C k^4 \sum\limits_{|n|\geq N+1} \frac{1}{|n|^2(1+|n|^2)^{1/2}} |f_n|^2.
\end{align*}
Note, that we have used the asymptotic relationship 
$$\gamma_{|n|}(k)  \sim - \frac{k^2 R}{2 |n|} \quad \text{ when $|n| >1$ } \quad \text{as} \quad k\to 0^+.$$
From the above estimate, we have that 
$$2\pi \big\| (T_{\mathrm{i}k} - T_{0})f \big\|^2_{H^{-1/2}(0,2\pi)} \leq C \left( \max_{0\leq |n| \leq N} \left\{ |\gamma_{|n|}(k)|^2 \right\} +  k^4 \right) \|f\|^2_{H^{1/2}(0,2\pi)}.$$
Therefore, we can conclude that 
$$ \big\| (T_{\mathrm{i}k} - T_{0}) \big\|^2 \leq C \left( \max_{0\leq |n| \leq N} \left\{ |\gamma_{|n|}(k)|^2 \right\} + k^4 \right) \quad \text{as} \quad k\to 0^+.$$
This proves the claim since both $|\gamma_{|n|}(k)|^2$ and $k^4$ tend to zero as $k\to 0^+$. 
\end{proof}

With the above result in Theorem \ref{opDtNthm2} we are now able to begin showing the existence of the clamped transmission eigenvalues.  Notice, that Theorem \ref{opDtNthm2} implies that 
$$ \mathbb{A}_k  \longrightarrow   \mathbb{A}_0   \quad \text{ as} \quad  k \to 0^+$$
in the operator norm where by the Riesz representation theorem the operator $\mathbb{A}_0 : X(D,B_R) \to X(D,B_R)$ is given by the variational equality  
\begin{align} \label{OpA0Def}
\big( \mathbb{A}_0 (v,w) \, ; (\psi_1, \psi_2) \big)_{X(D,B_R)}= \int_{D} \nabla{v}\cdot \nabla{\overline{\psi_1}} \, \dd x 
+\int_{\trB} {\nabla{w}\cdot \nabla{\ov{\psi_2}}}  \, \dd x  - \int_{\partial B_R} \overline{\psi_2} T_{0}w \, \dd s
\end{align}
i.e. the mapping $k \longmapsto \mathbb{A}_k$ is contiuous for all $k \geq 0$. Notice that the above analysis along with Theorems \ref{opAkthm} and \ref{opBthm} imply that the operators $\mathbb{A}_k$ and $\mathbb{B}$ satisfy the assumptions of Lemma \ref{eig_key_lemma}. To prove the existence of the clamped transmission eigenvalues, we now turn our attention to showing that there exists $\tau_1 \in [0, \infty)$ such that $\mathbb{A}_{\tau_1}-\tau_1^2\mathbb{B}$ is a positive operator and that there exists $\tau_2  \in [0, \infty)$ such that $\mathbb{A}_{\tau_2}-\tau_2^2\mathbb{B}$ is non--positive on some subset of $X(D,B_R)$. To this end, notice that 
\begin{align*}
\paren{\mathbb{A}_0(v,w)\, ; (v,w)}_{X(D,B_R)}
&= \int_D|\nabla v|^2 \dd x + \int_{\trB} |\nabla w|^2 - \int_{\partial B_R} \overline{w} T_0 w \dd s\\
& \geq \|\nabla v\|^2_{L^2(D)} + \|\nabla w\|^2_{L^2(\trB)} + C\|w\|^2_{H^{1/2}(\partial B_R)}\\
& \geq \min\{1, C\} \paren{\|\nabla v\|^2_{L^2(D)} + \|\nabla w\|^2_{L^2(\trB)} + \|w\|^2_{H^{1/2}(\partial B_R)}},
\end{align*}
where we have used  
$$  - \int_{\partial B_R} \overline{w} T_0 w \dd s \geq C \|w\|^2_{H^{1/2}(\partial B_R)}$$
for some constant $C>0$ independent of $w$ (see Section 5.3 of \cite{Cakoni-Colton-book}). We now wish to prove that $\mathbb{A}_0$ is coercive which would imply that $\mathbb{A}_{0} - {0}^2 \mathbb{B}$ is positive on $X(D,B_R)$.

\begin{theorem}\label{opA0thm}
The operator $\mathbb{A}_0$ defined by \eqref{OpA0Def} is coercive on $X(D,B_R)$.
\end{theorem}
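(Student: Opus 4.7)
The plan is to mirror the coercivity argument of Theorem \ref{opAkthm}, with the role of the bulk term $k^2\|w\|_{L^2(\trB)}^2$ now played by the boundary contribution $-\int_{\partial B_R} \overline{w} T_{0} w \, \dd s$, which by the inequality displayed immediately before the statement of Theorem \ref{opA0thm} controls $\|w\|^2_{H^{1/2}(\partial B_R)}$. Thus I already have the lower bound
$$\paren{\mathbb{A}_0(v,w)\,;(v,w)}_{X(D,B_R)} \geq \min\{1,C\}\paren{\|\nabla v\|^2_{L^2(D)} + \|\nabla w\|^2_{L^2(\trB)} + \|w\|^2_{H^{1/2}(\partial B_R)}},$$
and the remaining task is to upgrade the right-hand side so that it dominates the full $X(D,B_R)$-norm. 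I would accomplish this by a standard Rellich--Kondrachov compactness argument by contradiction.

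Suppose $\mathbb{A}_0$ is not coercive. Then there exists a sequence $\{(v_n, w_n)\} \subset X(D, B_R)$ with $\|(v_n, w_n)\|_{X(D, B_R)} = 1$ along which each of $\|\nabla v_n\|_{L^2(D)}$, $\|\nabla w_n\|_{L^2(\trB)}$, and $\|w_n\|_{H^{1/2}(\partial B_R)}$ tends to zero. Extract a subsequence (not relabelled) converging weakly in $X(D, B_R)$ to some limit $(v, w)$; since $X(D, B_R)$ is a closed subspace of $H^1(D) \times H^1(\trB)$, we still have $(v+w)|_{\partial D} = 0$. By the compact embeddings $H^1(D) \hookrightarrow L^2(D)$ and $H^1(\trB) \hookrightarrow L^2(\trB)$, the convergence is strong in $L^2(D) \times L^2(\trB)$, and the continuity of the trace operator together with $\|w_n\|_{H^{1/2}(\partial B_R)} \to 0$ forces $w|_{\partial B_R} = 0$ in the limit.

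Next, the vanishing of the gradients in the limit forces $v$ to be constant on each connected component of $D$ and $w$ to be constant on $\trB$, which is connected because $\dist(D, \partial B_R) > 0$. The boundary condition $w|_{\partial B_R} = 0$ pins $w \equiv 0$ in $\trB$, and then the transmission condition $(v+w)|_{\partial D} = 0$ gives $v|_{\partial D} = 0$, forcing the constant value of $v$ on each component of $D$ to be zero. Thus $(v, w) = (0, 0)$, and combining the strong $L^2$ convergence with the vanishing of the gradients gives $\|(v_n, w_n)\|_{X(D, B_R)} \to 0$, contradicting the normalization $\|(v_n, w_n)\|_{X(D, B_R)} = 1$.

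The main subtlety I anticipate is verifying that the $H^{1/2}(\partial B_R)$ boundary term really is enough to rule out a nonzero constant weak limit for $w$: the gradient bound alone only makes $w$ locally constant, so both the connectedness of $\trB$ (guaranteed by $\dist(D, \partial B_R) > 0$) and the trace continuity must be invoked. Once those two points are handled, the remainder of the argument is essentially identical to the coercivity proof for $\mathbb{A}_k$ with $k > 0$ in Theorem \ref{opAkthm}.
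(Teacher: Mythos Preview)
Your argument is correct and follows essentially the same contradiction-by-normalized-sequence strategy as the paper's proof: extract a weak limit, use the vanishing of $\|\nabla v_n\|_{L^2(D)}$, $\|\nabla w_n\|_{L^2(\trB)}$, and $\|w_n\|_{H^{1/2}(\partial B_R)}$ to force the limit to be $(0,0)$, and then invoke Rellich--Kondrachov to upgrade to strong convergence in $X(D,B_R)$, contradicting the normalization. If anything, you are slightly more explicit than the paper about the connectedness of $\trB$ and the possibility of multiple components of $D$, but the underlying route is the same.
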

\begin{proof}
We proceed by way of contradiction. Therefore, assume that there does not exist a constant $C>0$ such that 
$$\paren{\mathbb{A}_0(v,w) \, ; (v,w)}_{X(D,B_R)} \geq C \|(v,w)\|_{X(D,B_R)}^2,$$ 
then there exists a sequence $\left\{ (v_n, w_n) \right\}_{n \in \mathbb{N}} \in X(D,B_R)$ with $\|(v_n, w_n)\|_{X(D,B_R)} = 1$ for every $n \in \N$ such that 
\begin{align*}
\frac{1}{n} \geq \paren{\mathbb{A}_0(v_n,w_n) \, ; (v_n,w_n)}_{X(D,B_R)} \geq \alpha \paren{\|\nabla v_n \|^2_{L^2(D)} + \|\nabla w_n \|^2_{L^2(\trB)} + \|w_n\|^2_{H^{1/2}(\partial B_R)}} 
\end{align*}
for some $\alpha >0$. Since the sequence is bounded, there exists a subsequence that converges weakly to $(v,w)$ in $X(D,B_R)$. It follows that 
\begin{align*}
\nabla v = 0 \; \text{ in } D \; \text{ and } \nabla w = 0 \; \text{ in } \trB \text{ with } w=0 \; \text{ on } \partial B_R.
\end{align*}
Then, we have that $w_n$ converges strongly to $w=0$ in $H^1(\trB)$ by the compact embedding of $H^1(\trB)$ into $L^2(\trB)$. In addition, from the boundary condition $v|_{\partial D} = 0$ we obtain that $v$ also vanishes in $D$. The compact embedding of $H^1(D)$ into $L^2(D)$, implies that $v_n$ converges strongly to $0$, which contradicts to  $\|(v_n, w_n)\|_{X(D,B_R)} = 1$. Therefore, the operator $\mathbb{A}_0$ is coercive. 
\end{proof}

This was the last piece needed to prove the main result of the manuscript, i.e., that there exist infinitely many real clamped transmission eigenvalues.

\begin{theorem}\label{TE-exist}
There exist infinitely many positive clamped transmission eigenvalues corresponding to \eqref{tep3}--\eqref{tep4} or equivalently \eqref{tep1}--\eqref{tep2}. 
\end{theorem}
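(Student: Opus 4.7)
The plan is a direct application of Lemma~\ref{eig_key_lemma} on the Hilbert space $X(D, B_R)$. Theorems~\ref{opAkthm}, \ref{opDtNthm2}, and \ref{opA0thm} together show that $k \longmapsto \mathbb{A}_k$ is continuous on $[0,\infty)$ with values in self--adjoint, coercive (hence positive definite) operators, while Theorem~\ref{opBthm} gives that $\mathbb{B}$ is self--adjoint, compact, and non--negative. Every abstract hypothesis of Lemma~\ref{eig_key_lemma} is therefore already in place, and the only remaining task is to produce, for each $m \in \N$, parameters $0 \leq \tau_1 < \tau_2$ together with an $m$--dimensional subspace $V_m \subset X(D, B_R)$ on which the required positivity and non--positivity hold.

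The first parameter is immediate: take $\tau_1 = 0$, so that $\mathbb{A}_{\tau_1} - \tau_1^2 \mathbb{B} = \mathbb{A}_0$ is positive on $X(D, B_R)$ by Theorem~\ref{opA0thm}. The second parameter is supplied by the Dirichlet spectrum of $-\Delta$ on $D$. Let $0 < \lambda_1 \leq \lambda_2 \leq \cdots$ denote the Dirichlet eigenvalues repeated with multiplicity and let $\phi_j \in H_0^1(D)$ be corresponding $L^2(D)$--orthogonal eigenfunctions. Since $\phi_j|_{\partial D} = 0$, each pair $(\phi_j, 0)$ lies in $X(D, B_R)$, and I take $V_m = \operatorname{span}\{(\phi_1, 0), \ldots, (\phi_m, 0)\}$. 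For an arbitrary element $(V, 0) \in V_m$ with $V = \sum_{j=1}^{m} c_j \phi_j$, the vanishing second component collapses the exterior, DtN, and $H^1(\trB)$ contributions in $\mathcal{A}_k$, and Green's first identity combined with the orthogonality of the $\phi_j$ yields
\[
\mathcal{A}_{k}\big((V, 0)\, ; (V, 0)\big) - k^2 \mathcal{B}\big((V, 0)\, ; (V, 0)\big) = \sum_{j=1}^{m} |c_j|^2 (\lambda_j - k^2) \|\phi_j\|_{L^2(D)}^2.
\]

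Choosing $\tau_2 = \sqrt{\lambda_m}$ makes every term on the right non--positive, so $\mathbb{A}_{\tau_2} - \tau_2^2 \mathbb{B}$ is non--positive on the $m$--dimensional subspace $V_m$. Lemma~\ref{eig_key_lemma} then delivers at least $m$ positive clamped transmission eigenvalues in $[0, \sqrt{\lambda_m}]$. Because the set of clamped transmission eigenvalues is discrete with finite multiplicities by \cite{te-discret}, and $\lambda_m \to \infty$, letting $m \to \infty$ forces infinitely many distinct eigenvalues; Theorem~\ref{equiv-tep} then transfers the conclusion from \eqref{tep3}--\eqref{tep4} back to \eqref{tep1}--\eqref{tep2}. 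The main conceptual obstacle is the construction of the test subspace: in an interior--exterior formulation on an unbounded region, the additional positive contributions coming from $\trB$ and from the DtN map could easily destroy non--positivity, and the key observation is that setting $w \equiv 0$ decouples the problem so that $\mathcal{A}_{k} - k^2 \mathcal{B}$ collapses onto the interior Dirichlet quadratic form, where the spectral bound is classical.
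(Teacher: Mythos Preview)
Your argument is correct and follows the same overall strategy as the paper: apply Lemma~\ref{eig_key_lemma} with $\tau_1=0$ (so that $\mathbb{A}_{\tau_1}-\tau_1^2\mathbb{B}=\mathbb{A}_0$ is coercive) and test with pairs of the form $(v,0)$ where $v\in H_0^1(D)$, which kills the exterior and DtN contributions and reduces the quadratic form to $\int_D |\nabla v|^2 - k^2|v|^2\,\dd x$.

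The one genuine difference is in the construction of the $m$--dimensional subspace. The paper packs $M(\varepsilon)$ disjoint disks $B_\varepsilon^j\subset D$, takes the first Dirichlet eigenfunction of each disk extended by zero, and sets $\tau_2=\sqrt{\lambda_\varepsilon}$; letting $\varepsilon\to 0$ makes $M(\varepsilon)\to\infty$. You instead use the first $m$ Dirichlet eigenfunctions of $D$ itself and take $\tau_2=\sqrt{\lambda_m(D)}$. Your route is a bit more direct here and yields the sharper localization $k_j\in(0,\sqrt{\lambda_m(D)}]$, whereas the disk--packing argument gives $\tau_2=\sqrt{\lambda_\varepsilon}\to\infty$. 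The paper's construction, on the other hand, is the standard one in the transmission--eigenvalue literature because it generalizes immediately to problems with variable coefficients where the Dirichlet eigenfunctions of $D$ are not available in closed form; in the present constant--coefficient setting that extra robustness is not needed. Your appeal to the discreteness result of \cite{te-discret} to pass from ``at least $m$ eigenvalues for every $m$'' to ``infinitely many'' is fine and slightly more explicit than the paper, which simply lets $M(\varepsilon)\to\infty$.
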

\begin{proof}
To prove the claim, notice that we have shown that the operators $\mathbb{A}_{k}$ and $\mathbb{B}$ satisfy the assumption of Lemma \ref{eig_key_lemma}. We have already shown that $\tau_1=0$ satisfies that $\mathbb{A}_{\tau_1} - \tau_1^2 \mathbb{B}$ is coercive (i.e. positive) on $X(D,B_R)$. The last piece we need is to prove that there is a $\tau_2>0$ such that $\mathbb{A}_{\tau_2} - \tau_2^2 \mathbb{B}$ is non--positive on some $m$--dimensional subspace. This would imply the existence of at least $m$ clamped transmission eigenvalues. 

We now construct the relevant subspace by letting $B_\varepsilon^j$ for $j=1 , \ldots , M(\varepsilon)$ denote disjoint balls of radius $\varepsilon>0$. Here, $M(\varepsilon) \in \N$ is the largest number of disjoint disks $B_\varepsilon^j$ such that ${B_\varepsilon^j} \subset D$ for each $j$. Now let 
$$\tau_2 = \sqrt{\lambda_\varepsilon}, \quad \text{ where $\lambda_\varepsilon$ is the first Dirichlet eigenvalue for the disk $B_\varepsilon^j$.}$$
Note that since all the disks $B_\varepsilon^j$ have the same radius (with different centers) they all have the same set of Dirichlet eigenvalues. Therefore, there exists a corresponding non--trivial Dirichlet eigenfunctions 
$$v_{\varepsilon}^j \in H^1_0(B_\varepsilon^j),\quad \text{ such that } \quad \Delta v_{\varepsilon}^j +\tau_2^2 v_{\varepsilon}^j = 0 \quad \text{in $B_{\varepsilon}^j$}$$ 
for each $j=1 , \ldots , M(\varepsilon)$.

To construct an $M(\varepsilon)$--dimensional subspace of $X(D,B_R)$, we define $v_j \in H_0^1(D)$ such that 
$$v_j = v_{\varepsilon}^j \,\, \, \text{in $B_{\varepsilon}^j$} \quad \text{and} \quad v_j=0 \,\, \,  \text{in $D \setminus \ov{B_{\varepsilon}^j}$.}$$ 
Since $B_\varepsilon^j$ are disjoint, we have that $v_j$ for $j=1, \ldots, {M(\varepsilon)}$ have disjoint support. This implies that the functions $v_j$ are orthogonal and therefore linearly independent in $H^1(D)$. Let us now define the subspace 
$$W_{M(\varepsilon)} = \text{Span} \Big\{ (v_j , 0) \, : \,  j=1, \ldots, {M(\varepsilon)} \Big\} \subset X(D,B_R).$$ 
It is clear that for $(v_j,0) \in W_{M(\varepsilon)}$ we have that 
$$\mathcal{A}_{\tau_2} \big((v_j,0)\, ; (v_j,0) \big) - \tau_2^2 \mathcal{B}\big((v_j,0) \, ; (v_j,0) \big) = \int_D {|\nabla v_j|^2 - \tau_2^2 |v_j|^2} \, \dd x = 0.$$
%$$ \big( \mathbb{A}_{\tau_2}  \big( (v_j,0) \, ; (v_j,0) \big)_{X(D,B_R)}- \tau_2^2  \big(\mathbb{B} (v_j,0) \, ; (v_j,0) \big)_{X(D,B_R)} = \int_D {|\nabla v_j|^2 - \tau_2^2 |v_j|^2} \, \dd x =  0.$$
Thus, we have that for any $(v,0) \in W_{M(\varepsilon)}$ that 
$$\mathcal{A}_{\tau_2} \big((v,0)\, ; (v,0) \big) - \tau_2^2 \mathcal{B}\big((v,0) \, ; (v,0) \big) = \int_D {|\nabla v|^2 - \tau_2^2 |v|^2} \, \dd x =   0,$$
%$$\Big( \big( \mathbb{A}_{\tau_2} - \tau_2^2 \mathbb{B} \big)(v,0) \, ; (v,0) \Big)_{X(D,B_R)} \leq  0.$$
i.e., $\mathbb{A}_{\tau_2} - \tau_2^2 \mathbb{B}$ is non--positive on $W_{M(\varepsilon)}$. By Lemma \ref{eig_key_lemma}, this implies that there are at least $M(\varepsilon)$ clamped transmission eigenvalues. Using the fact that $M(\varepsilon) \to \infty$ as $\varepsilon \to 0$, we conclude that there are infinitely many positive clamped transmission eigenvalues. 
\end{proof}

With Theorem \ref{TE-exist}, we have shown the existence of infinitely many real clamped transmission eigenvalues. One open question is to determine how the clamped transmission eigenvalues depend on the scatterer $D$. It is conjectured in \cite{te-discret} that these clamped transmission eigenvalues are monotonically decreasing with respect to the measure of $D$. This conjecture is backed up by some numerical evidence. We know this fact to be true for the Dirichlet eigenvalues of the region $D$. Now, we aim to investigate the relationship between the first real clamped transmission eigenvalue and the first Dirichlet eigenvalue of the negative Laplacian in $D$.

\begin{theorem}\label{upperbound}
Let $k_1$ be the first eigenvalue corresponding to \eqref{tep3}--\eqref{tep4} or equivalently \eqref{tep1}--\eqref{tep2} and $\lambda_1$ be the first Dirichlet eigenvalue of the negative Laplacian in $D$.
Then, we have that $k_1^2 \leq \lambda_1.$
\end{theorem}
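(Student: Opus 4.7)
The plan is to apply Lemma \ref{eig_key_lemma} once more, this time with the specific choice $\tau_2 = \sqrt{\lambda_1}$, and to build a one-dimensional test subspace from the first Dirichlet eigenfunction on $D$. This mirrors the strategy used in Theorem \ref{TE-exist}, but now the disks $B_\varepsilon^j$ are replaced by the whole domain $D$ itself, which lets us get the sharpest upper bound.

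First I would let $\phi_1 \in H^1_0(D)$ denote a non--trivial first Dirichlet eigenfunction of the negative Laplacian on $D$, so that $-\Delta \phi_1 = \lambda_1 \phi_1$ in $D$ and $\phi_1|_{\partial D} = 0$. I would then consider the pair $(\phi_1, 0) \in X(D,B_R)$, which is admissible because $(\phi_1 + 0)|_{\partial D} = 0$ by virtue of the vanishing Dirichlet trace of $\phi_1$. This pair is non--trivial since $\phi_1$ is non--trivial, so it spans a one--dimensional subspace $W_1 \subset X(D,B_R)$.

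Next I would set $\tau_2 = \sqrt{\lambda_1}$ and compute directly from \eqref{seqformAk}--\eqref{seqformB} that
\begin{align*}
\mathcal{A}_{\tau_2}\big((\phi_1,0)\,;(\phi_1,0)\big) - \tau_2^2\, \mathcal{B}\big((\phi_1,0)\,;(\phi_1,0)\big) = \int_D |\nabla \phi_1|^2 \,\dd x - \lambda_1 \int_D |\phi_1|^2 \,\dd x,
\end{align*}
where the integrals over $\trB$ and $\partial B_R$ drop out because $w=0$. By the Rayleigh quotient characterization of $\lambda_1$ (or equivalently, multiplying $-\Delta \phi_1 = \lambda_1 \phi_1$ by $\overline{\phi_1}$ and integrating by parts using $\phi_1|_{\partial D}=0$), the right--hand side vanishes. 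Hence $\mathbb{A}_{\tau_2} - \tau_2^2 \mathbb{B}$ is non--positive on the one--dimensional subspace $W_1$.

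I would then invoke Theorem \ref{opA0thm} to conclude that with $\tau_1 = 0$ the operator $\mathbb{A}_{\tau_1} - \tau_1^2 \mathbb{B} = \mathbb{A}_0$ is positive on $X(D,B_R)$. Combined with Theorems \ref{opAkthm}, \ref{opBthm}, \ref{opDtNthm2} and the non--positivity on $W_1$ just established, the hypotheses of Lemma \ref{eig_key_lemma} are satisfied with $m=1$. This yields at least one clamped transmission eigenvalue $k \in [0, \sqrt{\lambda_1}]$, and since $k_1$ is by definition the smallest positive such eigenvalue, we obtain $k_1 \leq \sqrt{\lambda_1}$, i.e. $k_1^2 \leq \lambda_1$. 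There is no real obstacle here; the only subtlety worth double--checking is the admissibility of $(\phi_1,0)$ in $X(D,B_R)$, which is immediate from the homogeneous Dirichlet trace of $\phi_1$.
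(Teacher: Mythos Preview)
Your proposal is correct and follows essentially the same approach as the paper: you take the first Dirichlet eigenfunction $\phi_1\in H^1_0(D)$, form the test pair $(\phi_1,0)\in X(D,B_R)$, set $\tau_2=\sqrt{\lambda_1}$, verify that $\mathbb{A}_{\tau_2}-\tau_2^2\mathbb{B}$ vanishes on this one--dimensional subspace, and invoke Lemma~\ref{eig_key_lemma} with $\tau_1=0$ to conclude $k_1^2\le\lambda_1$. The only difference is cosmetic---you spell out the admissibility check and the Rayleigh--quotient justification a bit more explicitly than the paper does.
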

\begin{proof}
To begin, we can work similarly to the proof of the previous result. We consider the first Dirichlet eigenvalue of the negative Laplacian in $D$ which we denote $\lambda_1$ with corresponding normalized eigenfunction $v \in H^1_0(D)$.
Now, we define the positive value $\tau_2 = \sqrt{\lambda_1}$. 
Therefore, we have that 
$$W = \text{Span} \big\{ (v , 0)  \big\} \subset X(D,B_R)$$ 
is a one dimensional subspace of $X(D,B_R)$. 
By Green's first identity, we obtain that 
$$\mathcal{A}_{\tau_2} \big((v,0)\, ; (v,0) \big) - \tau_2^2 \mathcal{B}\big((v,0) \, ; (v,0) \big) = \int_D |\nabla v|^2 - \tau_2^2 |v|^2 \, \dd x =   0$$
i.e. $\mathbb{A}_{\tau_2} - \tau_2^2 \mathbb{B}$ is non--positive on $W$. Again by appealing to Lemma \ref{eig_key_lemma}, we have that there is at least one clamped transmission eigenvalue in the interval $( 0 , \tau_2]$. In particular, this would imply that the smallest (i.e. first) clamped transmission eigenvalue $k_1$ satisfies $k_1^2 \leq \lambda_1$, proving the claim. 
\end{proof}

This result provides a connection between the first clamped transmission eigenvalues and the first Dirichlet eigenvalue of $D$. This does not prove that the first clamped transmission eigenvalue is monotonically decreasing with respect to the measure of $D$, but does give some analytical evidence to this conjecture. This is due to the fact that the first Dirichlet eigenvalue is monotonically decreasing as a function of the measure of $D$. 

To conclude this section, we want to give a numerical example of Theorem \ref{upperbound}. Therefore, we will let $D$ be given by an ellipse such that 
$$\partial D = \big(\cos t \, , \,  \varepsilon \sin t\big)^\top \,\,\text{ for $t \in [0,2\pi)$} \quad \text{for $0<\varepsilon \leq 1$}.$$ 
Assuming we have a `thin' ellipse (i.e. $\varepsilon \ll 1$) it has been proven in \cite{Deig-asymp} (see equation (1.2)) that the first Dirichlet eigenvalue of the negative Laplacian has the asymptotic expansion 
$$\lambda_1 (\varepsilon)= \frac{\pi^2}{4\varepsilon^2} + \frac{\pi}{2\varepsilon}+\frac{3}{4}+ \left( \frac{11}{8\pi}+\frac{\pi}{12}\right)\varepsilon +\mathcal{O}(\varepsilon^2) \quad \text{as} \quad  \varepsilon \to 0.$$
Using the above formula for an approximation of the first Dirichlet eigenvalue along with the reported clamped transmission eigenvalues in \cite{te-discret} we can provide some numerical validation for Theorem \ref{upperbound}. For the case when $\varepsilon=1$, we use the well known explicit value for $\lambda_1 (1) \approx 5.78323$. 
\begin{table}[!ht]
\centering
 \begin{tabular}{r|c|c}
 $\varepsilon$ & $k_1$ & $\sqrt{\lambda_1 (\varepsilon)}$ \\
\hline
$0.5$ &  2.40418 & 3.75645\\
$0.6$ &  2.14377 & 3.26214\\
$0.7$ &  1.95646 & 2.91875\\
$0.8$ &  1.81492 & 2.66990\\
$1.0$ &  1.61464 & 2.40483\\
\hline
\end{tabular}
\caption{\label{ellipse} Comparison of the first clamped transmission eigenvalue and first Dirichlet eigenvalue for an elliptical scatterer. }
\end{table}
From Table \ref{ellipse}, we see that Theorem \ref{upperbound} has been validated. Also, as previously stated, we see that the clamped transmission eigenvalue seems to be monotonically decreasing as a function of the measure of $D$ since in our setup meas$(D)=\pi \varepsilon$. 

%%%%%%%%%%%%%%%%%%%%%%%%%%%%%%%%%%%%%%%%%%%
\section{Numerical Investigation}\label{numerics}
In this section, we will provide some numerical calculations for our clamped transmission eigenvalue problem. We also provide numerical evidence for an `interlacing conjecture'. Notice, that by Theorem \ref{upperbound} we have that $0<k_1^2 \leq \lambda_1$ where $\lambda_1$ is the first Dirichlet eigenvalue of the negative Laplacian in $D$. This can also be seen as 
$$\mu_1 = 0<k_1^2 \leq \lambda_1 \quad \text{where $\lambda_1$=1st Dirichlet eigenvalue and $\mu_1$=1st Neumann eigenvalue.}$$ 
This brings up the natural question, are the clamped transmission eigenvalues interlaced with the Dirichlet and Neumann eigenvalues? It is known that the Dirichlet and Neumann eigenvalues are interlacing, i.e. $\mu_j \leq \lambda_j$ where $\lambda_j$ is the $j$th Dirichlet eigenvalue and $\mu_j$ is the $j$th Neumann eigenvalue. Here we will investigate the interlacing conjecture: for any $j \in \N$
$$\mu_j \leq k_j^2 \leq \lambda_j \quad \text{where $\lambda_j$=$j$th Dirichlet eigenvalue and $\mu_j$=$j$th Neumann eigenvalue}$$
via our numerical calculations. 

To this end, we will use the method described in Section 5 of \cite{te-discret} to numerically compute the clamped transmission eigenvalues for a smooth scatterer $D$. This uses boundary integral equations to compute the clamped transmission eigenvalues. Similarly,  we will use boundary integral equations to compute the corresponding Dirichlet and Neumann eigenvalues. Therefore, let $u_{\text{Dir}}$ and $u_{\text{Neu}}$ be the corresponding Dirichlet and Neumann eigenfunctions for the region $D$. 

In order to compute the Dirichlet and Neumann eigenvalues, we will appeal to the method introduced in \cite{beyn}. Therefore, we will write the eigenvalue problems as a nonlinear operator values problem just as in \cite{klee2020}. With this in mind, we make the ansatz that 
$$u_{\text{Dir}}(x)=\mathrm{DL}_{\sqrt{\lambda}}\varphi(x) \quad \text{and} \quad u_{\text{Neu}}(x)=\mathrm{SL}_{\sqrt{\mu}}\psi(x)$$
for some densities $\varphi$ and $\psi$. Here, we let the double and single--layer operator be defined as
$$\mathrm{DL}_{\tau}\varphi(x)=\int_{\partial D} \partial_{\nu(y)} \Phi_\tau (x,y)\varphi(y)\,\mathrm{d}s(y) \quad \text{for $x \in D$}$$
and 
$$\mathrm{SL}_{\tau}\psi(x)=\int_{\partial D} \Phi_\tau (x,y) \psi(y)\,\mathrm{d}s(y) \quad \text{for $x \in D$}$$
where 
$${\displaystyle \Phi_\tau(x,y) = \frac{\text{i}}{4} H^{(1)}_0(\tau |x-y| ) } \quad \text{ for all } \quad x \neq y,$$
is the radiating fundamental solution for the Helmholtz equation in $\R^2$.  {\color{black}Here,} $\lambda$ and $\mu$ are the corresponding Dirichlet and Neumann eigenvalues when
$$u_{\text{Dir}}(x)=0 \quad \text{and } \quad \partial_{\nu(x)} u_{\text{Neu}} (x) = 0 \quad \text{for $x \in \partial D$},$$
i.e., by the well--known jump relations we have that there are non--trivial densities satisfying 
\begin{align}
\left(-\frac{1}{2}I+\mathrm{D}_{\sqrt{\lambda}}\right)\varphi(x) = 0 \quad \text{and} \quad  \left(\frac{1}{2}I+\mathrm{D}^\top_{\sqrt{\mu}}\right)\psi(x) = 0 \quad \text{for $x \in \partial D$.} \label{BIEs}
\end{align}
Here, we have that the above boundary integral operator are defined as 
$$\mathrm{D}_{\tau}\varphi(x)=\int_{\partial D} \partial_{\nu(y)} \Phi_\tau (x,y)\varphi(y)\,\mathrm{d}s(y) \quad \text{for $x \in \partial D$}$$
and 
$$\mathrm{D}^\top_{\tau}\psi(x)=\int_{\partial D} \partial_{\nu(x)} \Phi_\tau (x,y) \psi(y)\,\mathrm{d}s(y) \quad \text{for $x \in \partial D$}$$
with $I$ denoting the identity operator in the appropriate Sobolev Space for the unknown densities. 

The above boundary integral equations in \eqref{BIEs} are discretized in the same way as the one for the computation of clamped transmission eigenvalues via boundary element collocation method. Precisely, we use 40 faces (120 collocation nodes) to approximate the integral operators. With this we are able to compute the first five eigenvalues for each system to check the interlacing conjecture by using the  algorithm in \cite{beyn}. First, we consider the case when $D$ is given by the unit disk. In Table \ref{UnitDisk} we provide the first five clamped transmission eigenvalues $k_j$  (TE) along with the square root of the Dirichlet eigenvalues $\sqrt{\lambda_j}$ (DE) and square root of the Neumann eigenvalues $\sqrt{\mu_j}$ (NE). 
\begin{table}[H]
\centering
 \begin{tabular}{r|r|r|r|r|r|r}
\hline
\hline
                     & NE  & 0.00000 & 1.84119 & 1.84119 & 3.05424 & 3.05424\\
Unit Disk	    & TE  & 1.61464 & 3.05164 & 3.05164 & 4.36453 & 4.36453\\
		    & DE & 2.40483 & 3.83171 & 3.83171 & 5.13563 & 5.13562\\
\hline
 \end{tabular}
 \caption{The first five Dirichlet, Neumann and Clamped Transmission Eigenvalues for a unit disk.} \label{UnitDisk}
\end{table}

Notice that the computed Dirichlet and Neumann eigenvalues in Table \ref{UnitDisk} match up with the known eigenvalues computed via separation of variables. This implies that the aforementioned method is an accurate way to compute the Dirichlet and Neumann eigenvalues via boundary integral equations. We also notice that the interlacing conjecture seems to hold for the unit disk from our calculations.

We will now check our conjecture for different scatterers. Therefore, we will apply the numerical procedure described above for more complex shaped scatterers. This requires computing the boundary integral equations for the scatterers defined by their parametric representations. In Table \ref{scatterers}, we provide the representations for the Peanut--shaped, Star--shaped and  Kite--shaped scatterers. Also, the visual representation of the scatterers is given in Figure \ref{fig:scatterers}. 
  
\begin{table}[h]
	\centering
	\begin{tabular}{ l l }
\hline
\hline
		Scatterer     &  Parameterization \\
\hline 
\hline
		\vspace{-2.0ex}\\
		Star--shaped  & $x(t)={\displaystyle 0.25(3\cos(5t)/10+2) \big(\cos(t),\sin(t) \big)^\top}$
		\vspace{1.5ex} \\ 
		Peanut--shaped   & $x(t)={\displaystyle 0.5\sqrt{3\cos(t)^2+1} \big(\cos(t),\sin(t) \big)^\top}$  
		\vspace{1.5ex}\\ 
		Kite--shaped  & $x(t)={\displaystyle \big(0.75\cos(t)+0.3 \cos(2t),\sin(t) \big)^\top}$
		\vspace{1.5ex}\\
\hline
	\end{tabular}\caption{The boundary parameterizations of $\partial D=x(t)$ for $t\in [0,2\pi)$.} \label{scatterers}
\end{table}

\begin{figure}[h]
	\centering	
	\subfigure[Star--shaped scatterer]{\includegraphics[width=0.32\textwidth]{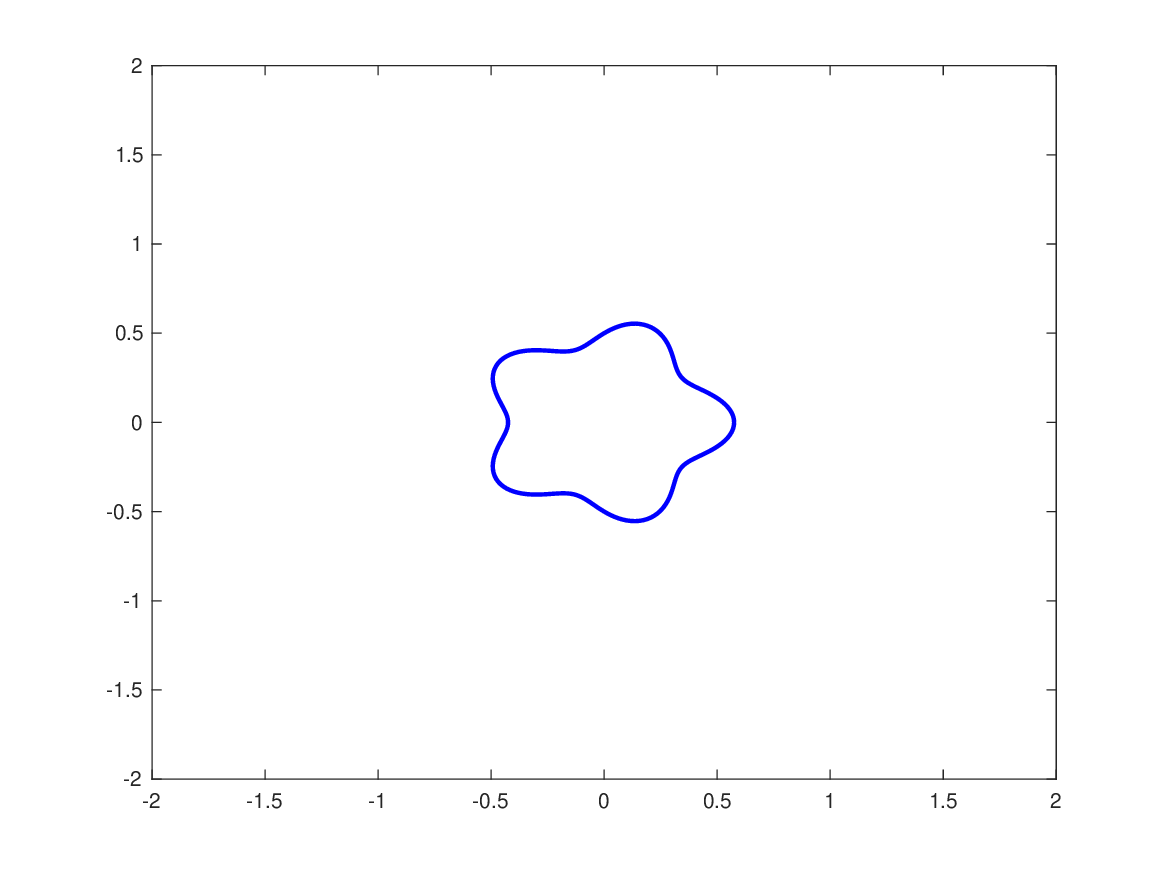}} 
	\subfigure[Peanut--shaped scatterer]{\includegraphics[width=0.32\textwidth]{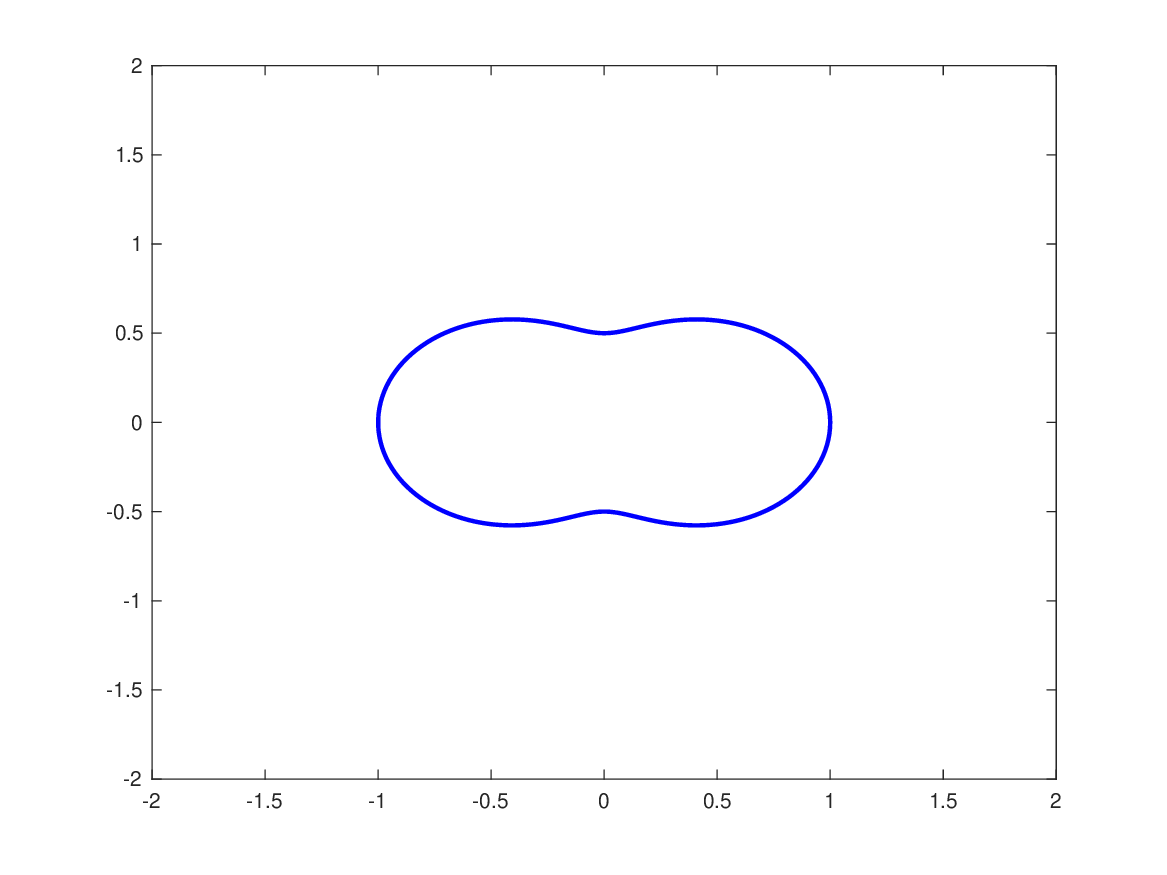}}
	\subfigure[Kite--shaped scatterer]{\includegraphics[width=0.32\textwidth]{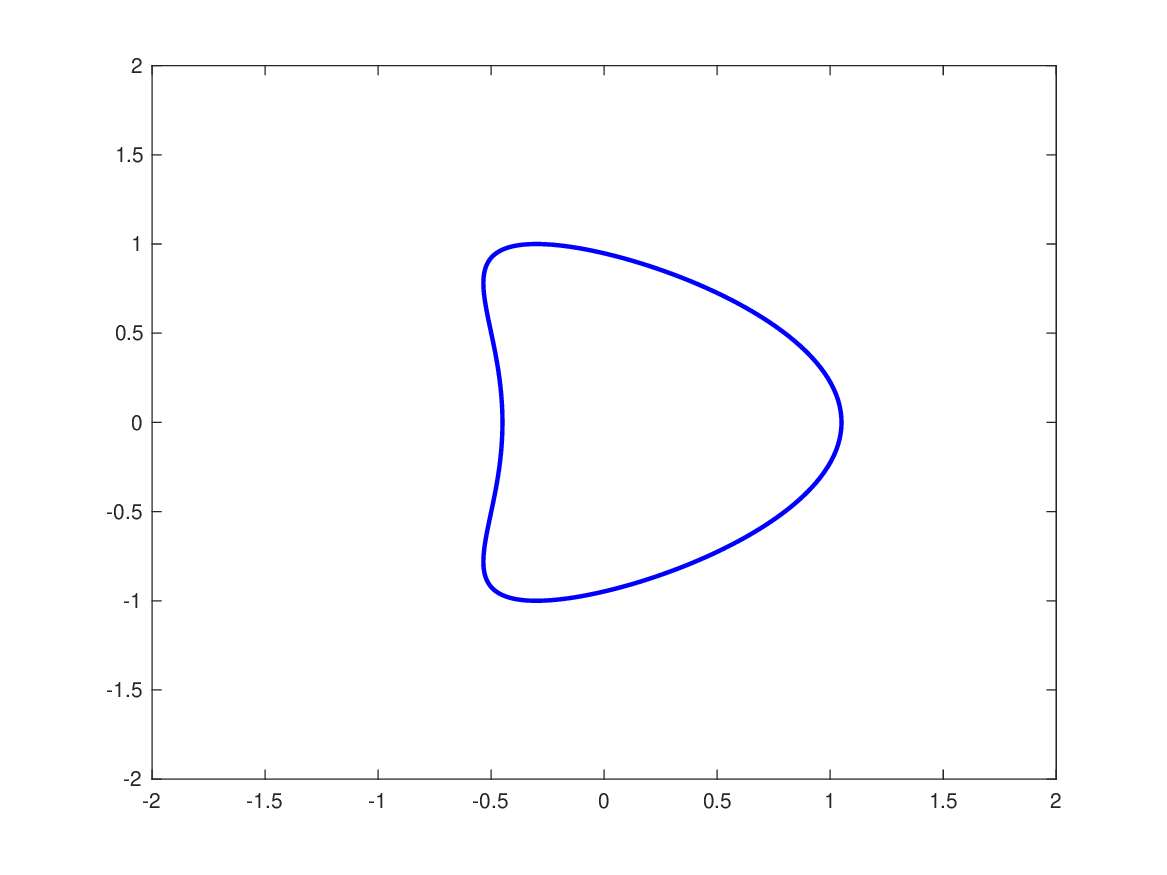}}
	\caption{Visual representations of the three scatterers defined in Table \ref{scatterers}.}\label{fig:scatterers}
\end{figure}

The numerical results for the three different scatterers is given in Table \ref{otherscatterers}. Here, we again present the first five eigenvalues for the aforementioned scatterers. This gives numerical evidence that the interlacing conjecture seems to hold for all three scatterers (and the unit disk). Another observation to make from Tables \ref{UnitDisk} and \ref{otherscatterers} is that the first clamped transmission eigenvalue is seen to be monotone with respect to meas$(D)$ as stated in the previous section. Indeed, we have that 
$$\text{meas}(\text{Kite}) \approx 2.356, \quad   \text{meas}(\text{Peanut}) \approx 1.963, \quad \text{ and } \quad \text{meas}(\text{Star}) \approx 0.758$$ 
and from our calculations we have that 
$$k_1 (\text{Kite})    \leq k_1(\text{Peanut})   \leq k_1(\text{Star}) . $$
For some simpler eigenvalue problems, interlacing inequalities and monotonicity with respect to meas$(D)$ can be obtained via the Rayleigh quotient. Due to the more complex structure of the clamped transmission eigenvalue problem, this approach is not as straightforward.

\begin{table}[h]
\centering
 \begin{tabular}{r|r|r|r|r|r|r}
\hline
\hline
                             & NE & 0.00000 & 3.51176 & 3.51176 & 5.32657 & 5.32657\\
Star--shaped  	    & TE  & 3.26716 & 6.18638 & 6.18638 & 8.70290 & 8.70290\\
		            & DE & 5.06979 & 8.00314 & 8.00314 & 10.45544 & 10.45544\\
\hline
                     		& NE & 0.00000 & 1.72126 & 3.02611 & 3.45854 & 3.66118\\
Peanut--shaped	& TE  & 2.13093 & 3.41900 & 4.70289 & 4.89266 & 5.55246\\
		    		& DE & 3.36058 & 4.38535 & 5.79406 & 6.08287 & 6.68797\\
\hline
                              &NE  & 0.00000 & 1.77091 & 2.18272 & 3.39190 & 3.52298\\
Kite--shaped          &TE  & 1.91665 & 3.38373 & 3.75151 & 4.96416 & 5.03581\\
                              &DE  & 2.95502 & 4.37204 & 4.78839 & 6.03903 & 6.66370\\
\hline
 \end{tabular}
 \caption{The first five Dirichlet, Neumann and Clamped Transmission Eigenvalues for the Peanut--shaped, Star--shaped and  Kite--shaped scatterers.} \label{otherscatterers}
\end{table}

%%%%%%%%%%%%%%%%%%%%%%%%%%%%%%%%%%%%%%%%%%%
\section{Conclusion}\label{conclusion}
In conclusion, we have provided an analytical and numerical study of the new clamped transmission eigenvalue problem. This eigenvalue problem is associated with scattering in a thin elastic plate. Even though the scatterer is bounded, the eigenvalue problem \eqref{tep1}--\eqref{tep2} is posed in the entirety of $\R^2$. With the results in this paper, we now know that there exists an infinite discrete set of real eigenvalues such that $k>0$. Here, we have provided more justification that the first clamped transmission eigenvalue is monotonically decreasing with respect to the measure of $D$, but it has not yet been proved. Another question that arises from Theorem \ref{upperbound} is if there are interlacing inequalities with other eigenvalues associated with the scatterer, which we have numerically investigated in Section \ref{numerics}. Also, the eigenvalue problem studied here is derived via a clamped obstacle assumption. It would be interesting to see what eigenvalue problems one derives from either the simply supported or free plate obstacle assumption. {\color{black}Lastly, spectral patterns of transmission eigenfunctions have received considerable attentions with striking applications in \cite{BS1,BS2,BS3,BS4} which can also be studied for this model.}\\

%%%%%%%%%%%%%%%%%%%%%%%%%%%%%%%%%%%%%%%%%%%%%%%%%%%%%%%%%
\noindent{\bf Acknowledgments:} The research of author I. Harris is partially supported by the NSF DMS Grants 2509722 and 2208256.

%%%%%%%%%%%%%%%%%%%%%%%%%%%%%%%%%%%%%%%%%%%%%%%%%%%%%%%%%

\end{document}